\newcommand{\Z}{\ensuremath{\mathbb{Z}}}
\newcommand{\To}{\rightarrow}
\newcommand{\N}{\ensuremath{\mathbb{N}}}
\newcommand{\mc}[1]{\ensuremath{\mathcal #1}}
\newcommand{\la}{\ensuremath{\lambda}}
\newcommand{\abs}[1]{\ensuremath{\lvert #1\rvert}}
\def\Super{{\rm Super}}
\def\Pl{{\rm Pl}}
\def\FX{{F\langle X \rangle}}
\def\FL{{F\langle L \rangle}}
\def\L{\mc{M}(L)}
\def\tila{\tilde{\la}}
\def\titau{\tilde{\tau}}
\def\til{\tilde{l}}
\def\char{{\rm char}}
\newtheorem{thm}{Theorem}[section]
\newtheorem{prop}[thm]{Proposition}
\newtheorem{lem}[thm]{Lemma}
\newtheorem{cor}[thm]{Corollary}
\newtheorem{ex}[thm]{Example}
\theoremstyle{definition}
\newtheorem{df}[thm]{Definition}
\theoremstyle{remark}
\newtheorem{rem}[thm]{Remark}
\begin{document}

\title{Super RSK-algorithms and super plactic monoid}

\author[R. La Scala]{Roberto La Scala$^*$}

\address{$^*$ Dipartimento di Matematica,
via Orabona 4, 70125 Bari, Italia}

\email{lascala@dm.uniba.it}
\author[V. Nardozza]{Vincenzo Nardozza$^{\dagger}$}

\address{$^{\dagger}$ Dipartimento di Matematica e Applicazioni,
via Archirafi 34, 90123 Palermo, Italia}

\email{vickkk@tiscali.it}

\author[D. Senato]{Domenico Senato$^{\ddagger}$}

\address{$^{\ddagger}$ Dipartimento di Matematica,
Campus Macchia Romana, 85100 Potenza, Italia}

\email{senato@unibas.it}
\thanks{Partially supported by Universit\`a della Basilicata,
Universit\`a di Bari and COFIN MIUR 2003
\newline
{\em 2000 Mathematics Subject Classification:} 05E10, 05E15, 20Cxx.
{\em Key words:} signed alphabet, Young tableau, plactic monoid,
RSK-correspondence}

\begin{abstract}
We construct the analogue of the plactic monoid for the super
semistandard Young tableaux over a signed alphabet. This is done
by developing a generalization of the Knuth's relations. Moreover
we get generalizations of Greene's invariants and Young-Pieri
rule. A generalization of the symmetry theorem in the signed case
is also obtained. Except for this last result, all the other
results are proved without restrictions on the orderings of the
alphabets.
\end{abstract}

\maketitle

\section{Introduction}
Young tableaux are simple objects with a wide range of applications.A vast literature dealing with Young tableaux has been developing through the decades. In our paper, we shall be mainly concerning their relations with an algebraic structure called {\em plactic monoid}.

Starting from the beginning, in 1938 Robinson worked out an
algorithm in order to compute the coefficients of products of
Schur functions (Littlewood-Richardson rule). In 1961 Schensted
(\cite{Schensted61}) brought new life to Robinson algorithm, in a
clearer form. Finally, in 1970 Knuth refined the
Robinson-Schensted correspondence, by detecting its fundamental
laws and giving it the form of a computer program. This, together
with the further results obtained by Greene, revealed the inner
structure of the correspondence, and are the key step in order to
apply it as a combinatorial tool.

Knuth relations were the crucial point in the construction by
Lascoux and Sch\"utzenberger (\cite{LascouxSchutz81}). They turned
the set of Young tableaux into a monoid structure, called the
plactic monoid, taking into account most of their combinatorial
properties and having a wide range of applications.

In these classical settings, the filling of the Young tableaux are
from an alphabet, i.e.  from a totally ordered set $L$. In our
paper we shall use entries from a signed alphabet. A signed
alphabet is a totally ordered set $L$, usually finite or
countable, which is disjoint union of two subsets, say $L_0,L_1$.
We denote $|x| = \alpha$ when $x\in L_\alpha$. Algebraic
structures insisting on a signed alphabet have been conceived to
provide combinatorial methods for important algebraic theories as
the invariant theory in superalgebras \cite{GrosshansRotaStein87}
and the representation theory of general Lie superalgebras
\cite{BereleRegev87}. Let us mention that these theories can be
applied in particular to study algebras satisfying polynomial
identities (see for example \cite{BereleRegev83,CariniDrensky94}).
The combinatorics involved with this subject primarily concern
with Young tableaux. A notion of semistandardness has to be
properly defined for these Young tableaux. Many papers follow the
approach introduced in \cite{BereleRemmel85} by defining these
tableaux as ``$(k,l)$-semistandard tableaux'' that is assuming
$L_0 < L_1$. Following the setting suggested in
\cite{GrosshansRotaStein87,BonettiSenatoVenezia88} we define
instead the notion of ``super semistandard Young tableaux'' (see
Section 2), where the ordering of $L$ is any. In fact, we are able
to show that many results about Young tableaux do not rely on the
condition $L_0 < L_1$.

In Section 3, for a signed alphabet, we develop in full generality
a notion of ``superplactic monoid'' by means of analogues of the
Knuth relations. We obtain as a by-product a generalization of the
Greene's results about his invariants. In Section 4 we recall a
variant of the Robinson-Schensted-Knuth algorithms establishing a
one-to-one correspondence between pairs of super semistandard
tableaux and two-rowed arrays with entries in signed alphabets
(\cite{BonettiSenatoVenezia88}). A different correspondence for
$(k,l)$-semistandard tableaux also appeared in
\cite{GouldenGreene93}. We give here a new account of the
algorithms in \cite{BonettiSenatoVenezia88} providing an
``implementation-ready'' description of them. We conclude with a
super-analogue of the ``symmetry theorem'' \cite{Schutz63}, with
the aid of some further assumptions.

One application of the RSK-correspondence consists in providing the linear
independence in the celebrated ``standard basis theorem''.
Note that by means of the super RSK we obtain the same result for
the ``super standard basis'' introduced in \cite{GrosshansRotaStein87}.
Let us explain this in more detail.
Let $X$ be a signed alphabet and $F$ a field of characteristic zero.
We denote by $\FX$ the free associative algebra generated by $X$
that is the tensor algebra of the vector space $F X$.
This algebra is $\Z_2$-graded if we put $|w| := |x_1| + \cdots + |x_n|$
for any monomial $w = x_1 \cdots x_n$. Note that a $\Z_2$-graded
algebra is called also a {\em superalgebra}. Denote by $I$ the two-sided
ideal of $\FX$ generated the binomials
$
x_i x_j - (-1)^{|x_i||x_j|} x_j x_i,
$
where $x_i,x_j\in X$. Clearly $I$ is $\Z_2$-graded ideal and we define
$
\Super[X] := \FX/I.
$
This $\Z_2$-graded algebra is called the {\em free supercommutative
algebra generated by $X$}. The identities of $\Super[X]$ defined by
the above binomials are said in fact {\em supercommutative identities}.
It is plain that an $F$-basis of $\Super[X]$ is given by the cosets of the
monomials $w = x_1 \ldots x_n$ such that
$
x_i \leq x_{i+1},\ \mbox{with}\ x_i = x_{i+1}\ \mbox{only if}\ |x_i| = 0.
$
Note that the algebra $\Super[X]$ is isomorphic to the tensor product
$F[X_0] \otimes_F E[X_1]$ where $F[X_0]$ is the polynomial ring in the commuting
variables $x_i\in X_0$ and $E[X_1]$ is the exterior (or Grassmann) algebra
of the vector space $F X_1$.

Consider now $L,P$ two signed alphabets and define for the set $L \times P$
a structure of signed alphabet by ordering in the right lexicographic way
and putting $|(a,b)| = |a| + |b|$, for any $(a,b)\in L \times P$. We put
$\Super[L|P] := \Super[L \times P]$ and, according to \cite{RotaSturmfels88},
we call this supercommutative algebra the {\em letter-place superalgebra}.
A variable $(a,b)\in L \times P$ will be written as $(a|b)$ where
$a$ is said  a ``letter'' and $b$  a ``place''. This comes from the possibility
to embed the tensor superalgebra $\FL$ into $\Super[L \times P]$ simply
by putting
$
w = a_1 \ldots a_n \mapsto m = (a_1|1) \ldots (a_n|n).
$
Note that for the purposes of invariant theory, we do not have to assume
$\char(F) = 0$. Therefore, in characteristic free, a more general
definition of signed alphabet and $\Super[L|P]$ is given which
involves the notion of divided powers (see \cite{GrosshansRotaStein87}).
In this case, $\Super[L|P]$ is known as the {\em four-fold algebra}.

All the monomials $m = (a_1|b_1) \ldots (a_n|b_n)\in \Super[L|P]$
can be written so that
$
(a_i|b_i) \leq (a_{i+1}|b_{i+1}),\ \mbox{with}\ (a_i|b_i) = (a_{i+1}|b_{i+1})\
\mbox{only if}\ |(a_i|b_i)| = 0.
$
Therefore, we can denote them in a combinatorial way as two-rowed arrays
with signed entries that is as
$
S =
\left[
\begin{array}{@{\hskip 2pt}c@{\hskip 3pt}c@{\hskip 3pt}c@{\hskip 2pt}}
a_1 & \ldots & a_n \\
b_1 & \ldots & b_n \\
\end{array}
\right].
$
An important problem for the letter-place algebra consists in describing
an $F$-linear basis given by polynomials that have some invariant behavior.
It was showed in \cite{GrosshansRotaStein87} that we can attach to each
pair $(T,U)$ of super semistandard Young tableaux a suitable invariant
polynomial of the algebra $\Super[L|P]$ which is usually denoted
as $(T|U)$. Up to a sign, $(T|U)$ is a product of polynomials defined
for each row of the tableaux $T,U$. Let $a_1,\ldots,a_k$ and $b_1,\ldots,b_k$
denote the elements of some row of $T$ and $U$ respectively.
In the non-signed case, that is for $L = L_0,P = P_0$, by considering
the matrix whose entries are the commuting variables $(a|b)$ we have that
the row polynomial $(a_1,\ldots,a_k|b_1,\ldots,b_k)$ is defined simply
as the determinant of the $k\times k$ submatrix extracted by the rows
$a_1,\ldots,a_k$ and the columns $b_1,\ldots,b_k$ (see
\cite{GrosshansRotaStein87} for the general definition in the signed case).

By the ``straightening law'' it can be proved that the polynomials
$(T|U)$ span the letter-place superalgebra. Since the super RSK-correspondence
implies a bijection between the elements of the monomial basis
of $\Super[L|P]$ and the pairs of tableaux $(T,U)$, we conclude that
the invariants $(T|U)$ are actually an $F$-linear basis of $\Super[L|P]$.
This result is known as ``super standard basis theorem'' and it can
be proved also with different techniques. Note finally that this theorem
is not only a fundamental tool for invariant theory but it can been
applied as well to the representation theory of general Lie superalgebras
(see \cite{BriniRegonatiTeolis03}).

\section{Super semistandard Young tableaux. Bumping}

Let $\N$ denote the set of positive integers and let $n\in \N$.
A {\em partition} of $n$ is a sequence of integers $\la = (\la_1,\dots,\la_m)$
such that $\la_1 \geq \dots \geq \la_m > 0$ and $\sum \la_i = n$.
The integer $m \in \N$ is called {\em number of parts} or {\em height}
of the partition. We denote $\la\vdash n$ if $\la$ is a partition of $n$.

The {\em Ferrers-Young diagram} of a partition $\la = (\la_1,\dots,\la_m)$
is defined as the set:
\[
D(\lambda):= \{ (i,j)\ |\ 1\leq i\leq m,1\leq j\leq \lambda_i \}.
\]
We can visualize $D(\lambda)$ by drawing a box for each pair $(i,j)$.
For instance, the diagram of $\la = (7,7,5,3,3,1)$ is:
\begin{center}
\begin{picture}(80,70)
\multiput(0,60)(0,-10){3}{\line(1,0){70}} %
\put(0,30){\line(1,0){50}}%
\multiput(0,20)(0,-10){2}{\line(1,0){30}}%
\put(0,0){\line(1,0){10}}%
\multiput(0,0)(10,0){2}{\line(0,1){60}}%
\multiput(20,10)(10,0){2}{\line(0,1){50}}%
\multiput(40,30)(10,0){2}{\line(0,1){30}}%
\multiput(60,40)(10,0){2}{\line(0,1){20}}%
\end{picture}
\end{center}
Note that the transposed diagram $\{(j,i)\mid (i,j)\in D(\la)\}$
defines another partition $\tila\vdash n$ whose parts are the lengths
of the columns of $D(\la)$. The partition $\tila$ is called the {\em conjugate
partition} of $\la$. Finally, let $\la,\mu$ be two partitions. We write
$\mu \subset \la$ if $\la_i \leq \mu_i$ for all $i$. In this case we put
$
D(\la/\mu) := \{ (i,j)\ |\ 1\leq i\leq m,\mu_i < j\leq \lambda_i \}.
$

Although Ferrers-Young diagrams are interesting combinatorial objects on
their own right, we are mainly interested in Young tableaux.
Denote by $\Z_2 = \{0,1\}$ the additive cyclic group of order 2.

\begin{df}
Let $L$ be a finite or countable set. Assume $L$ totally ordered and
let $\abs{\cdot}:L\to \Z_2$ be any map. We call the ordered pair $(L,\abs{\cdot})$
a {\em signed alphabet} and we put $L_0:= \{a\in L\mid |a| = 0\}$
and $L_1:= \{a\in L\mid |a| = 1\}$.
\end{df}

\begin{df}
\label{df: super semistandard Young tableau}
Let $\la\vdash n$ be a partition and let $L$ be a signed alphabet.
A {\em super semistandard Young tableau} is a pair $T := (\la,\tau)$
where $\tau:D(\la)\to L$ is a map such that:
\begin{itemize}
\item[(i)]  $\tau(i,j)\leq \tau(i,j+1)$, with $\tau(i,j)=\tau(i,j+1)$
only if $|\tau(i,j)| = 0$,
\item[(ii)] $\tau(i,j)\leq \tau(i+1,j)$, with $\tau(i,j)=\tau(i+1,j)$
only if $|\tau(i,j)| = 1$.
\end{itemize}
We say that $D(\la)$ is the {\em frame} and $\tau$ the {\em filling}
of the tableau $T$. Moreover, $\lambda$ is called the {\em shape} of $T$.
We say that $T$ is a {\em standard Young tableau} if the map $\tau$
is injective.
\end{df}

In the same way, we can define also a tableau with frame $D(\la/\mu)$
when $\mu \subset \la$. In this case we say that this tableaux has
{\em skew shape} $\la/\mu$. Note that the classic notion of semistandard
Young tableau is recovered when the map $\abs{\cdot}$ has a constant value.
More precisely, if $L = L_0$ we obtain {\em row-strict semistandard tableaux}
and {\em column-strict} ones when $L = L_1$. If we assume the ordering on $L$
is such that $L_0 < L_1$ we obtain the notion of {\em $(k,l)$-semistandard
tableau} introduced in \cite{BereleRemmel85,BereleRegev87}. We do not choose
to make this assumption because many results about tableaux on a signed
alphabet do not depend on it. From now on, unless explicitly stated, we use
the word ``alphabet'' for ``signed alphabet'' and ``Young tableau'' meaning
``super semistandard Young tableau''.

Assume that a Young tableau has been given, and a letter from $L$ as well.
A basic problem in algebraic combinatorics consists in finding a method
for constructing a tableau that includes the new letter and yet is a Young tableau.
In the non-signed case, the Schensted's construction solves the problem in a fully
satisfying way. It can be realized by two ``dual'' algorithms: the insertion
by rows or columns. Here we give generalizations of Schensted's algorithms
that take into account the fact that we are dealing with signed letters. We start
defining the {\em row-insertion}.

\begin{df}
\label{df: row-insertion}
Let $T = (\la,\tau)$ be a Young tableau with $\la = (\la_1,\ldots,\la_m)$
and let $x\in L$. By $T\leftarrow x$ we mean the following procedure:

\vskip 2pt \noindent
\begin{tabular}{@{\hskip 0pt}l@{\hskip 4pt}l}
{\it Step 0}  & Put $\la_{m+1}:= 0$ and $i:= 1$. \\
{\it Step 1a} & If $|x| = 0$ then put $J:= \{1\leq j\leq \la_i\ |\ x < \tau(i,j)\}$. \\
{\it Step 1b} & If $|x| = 1$ then put $J:= \{1\leq j\leq \la_i\ |\ x \leq \tau(i,j)\}$. \\
{\it Step 2}  & If $J = \emptyset$ then go to Step 4. \\
{\it Step 3}  & Put $j:= \min(J)$ and $y:= \tau(i,j)$. Put $\tau(i,j):= x, x:= y,i:= i + 1$ \\
              & and go to Step 1. \\
{\it Step 4}  & Put $\la_i:= \la_i + 1,\tau(i,\la_i):= x$. Output $(\la,\tau)$ and $i$. \\
\end{tabular}

\vskip 2pt \noindent
In the Step 3, we say the letter $x$ {\em bumps} the entry $y$ from the $i$-th row.
We denote the Young tableau obtained by means of this procedure as $[T\leftarrow x]$.
\end{df}

\begin{ex}
Let $L := \N$, with signature given by $L_0 := \{ \mbox{odd numbers} \}$
and $L_1$ defined consequently. Consider the following tableau:
\[
T\ :=\
\begin{matrix}
1 & 1 & 1 & 2 & 4 & 5\\
2 & 3 & 3 & 4\\
2 & 4\\
2 & 4\\
2\\
3
\end{matrix}
\]
and let us row-insert at first the letter $6$. By Step 2, since 6 is greater than
any entry of the first row, we go immediately to Step 4 and hence $[T\leftarrow 6]$
is:
\[
\begin{matrix}
1 & 1 & 1 & 2 & 4 & 5 & 6\\
2 & 3 & 3 & 4\\
2 & 4\\
2 & 4\\
2\\
3
\end{matrix}
\]
Now, let us insert the letter $1$ in the tableau $T':= [T\leftarrow 6]$. We go through
Step 3 and this letter bumps the entry $2$ which now we are trying to place in the
second row. This can be done bumping the already-placed $2$, and then we test its
insertion in the third row. There, and in the rows below, the letter $2$ continues to be
displaced, until the last row, where we test the insertion of $2$. Such insertion can
be done by bumping the letter $3$ and finally by Step 4, this letter forms a new row.
Hence the tableau $[T'\leftarrow 1]$ is the following:
\[
\begin{matrix}
1 & 1 & 1 & 1 & 4 & 5 &6\\
2 & 3 & 3 & 4\\
2 & 4\\
2 & 4\\
2\\
2\\
3
\end{matrix}
\]
\end{ex}

\smallskip
Note that procedure of row-insertion $T\leftarrow x$ can be reversed since we record
the row of the new box added to $T$. In fact, by the definition of row-insertion
such box is added to $T$ as the last in its row and column. Precisely, a procedure
of {\em row-deletion} is defined as follows:

\begin{df}
\label{df: row-deletion}
Let $T = (\la,\tau)$ be a Young tableau with $\la = (\la_1,\ldots,\la_m)$ and
let $1 \leq i\leq m$. We denote by $i\leftarrow T$ the following procedure:

\vskip 2pt \noindent
\begin{tabular}{@{\hskip 0pt}l@{\hskip 4pt}l}
{\it Step 0}  & Put $x:= \tau(i,\la_i)$ and $\la_i:= \la_i - 1$. Put $\la_0:= 0$ and $h:= i - 1$. \\
{\it Step 1a} & If $|x| = 0$ then put $J:= \{1\leq j\leq \la_i\ |\ \tau(i,j) < x\}$. \\
{\it Step 1b} & If $|x| = 1$ then put $J:= \{1\leq j\leq \la_i\ |\ \tau(i,j) \leq x\}$. \\
{\it Step 2}  & If $J = \emptyset$ then go to Step 4. \\
{\it Step 3}  & Put $k:= \max(J)$ and $y:= \tau(h,k)$. Put $\tau(h,k):= x, x:= y,h:= h - 1$ \\
              & and go to Step 1. \\
{\it Step 4}  & Output $(\la,\tau)$ and $x$. \\
\end{tabular}

\vskip 2pt \noindent
We write $[i\leftarrow T]$ for the tableau we obtain.
\end{df}

Resembling the Definition \ref{df: row-insertion}, a column-insertion algorithm
can be also defined.

\begin{df}\label{df: column-insertion}
Let $T = (\la,\tau)$ be a tableau and let $x\in L$. Denote by $\tila = (\tila_1,\ldots,\tila_r)$
the conjugate partition of $\la$. We define $x\rightarrow T$ the following procedure:

\vskip 2pt \noindent
\begin{tabular}{@{\hskip 0pt}l@{\hskip 4pt}l}
{\it Step 0}  & Put $\tila_{r+1}:= 0$ and $j:= 1$. \\
{\it Step 1a} & If $|x| = 0$ then put $I:= \{1\leq i\leq \tila_j\ |\ x \leq \tau(i,j)\}$. \\
{\it Step 1b} & If $|x| = 1$ then put $I:= \{1\leq i\leq \tila_j\ |\ x < \tau(i,j)\}$. \\
{\it Step 2}  & If $I = \emptyset$ then go to Step 4. \\
{\it Step 3}  & Put $k:= \min(I)$ and $y:= \tau(i,j)$. Put $\tau(i,j):= x, x:= y,j:= j + 1$ \\
              & and go to Step 1. \\
{\it Step 4}  & Put $\tila_j:= \tila_j + 1,\tau(\tila_j,j):= x$. Output $(\la,\tau)$ and $j$. \\
\end{tabular}

\vskip 2pt \noindent
The tableau obtained by means of this procedure will be denoted as $[x\rightarrow T]$.
\end{df}

The definition of the {\em column-deletion} is left to the reader. Such procedure
will be denoted as $T\rightarrow i$.

\smallskip
\begin{rem}
\label{rem: row-column duality}
The two procedures of Definitions \ref{df: row-insertion} and \ref{df: column-insertion}
are related by a simple duality involving the signature on $L$.
Namely, let $L$ be an alphabet and define $\tilde{L}$ by putting $\tilde{L}_0 := L_1$
and $\tilde{L}_1 := L_0$. We call $\tilde{L}$ the {\em conjugate alphabet of $L$}.
If $T = (\la,\tau)$ is a Young tableau, denote by $\tilde{T}$ the pair $(\tila,\titau)$
where $\tila$ is the conjugate partition of $\la$ and $\titau(i,j) := \tau(j,i)$
for any $(i,j)\in D(\tila)$. Clearly $\tilde{T}$ is a Young tableau for the conjugate
alphabet $\tilde{L}$. Then, from the definitions it follows easily that:
\[
[x\rightarrow T] = \widetilde{[\tilde{T}\leftarrow x]}
\]
where in the right hand side the letter $x$ has to be considered as an element
of the conjugate alphabet.
\end{rem}

\begin{prop}
\label{prop: correctness of insertion}
Let $T$ be a Young tableau and $x\in L$. Then both $[T\leftarrow x]$ and
$[x\rightarrow T]$ are Young tableaux.
\end{prop}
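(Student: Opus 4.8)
The plan is to reduce the two assertions to a single one by invoking the duality of Remark \ref{rem: row-column duality}. Since $[x\rightarrow T]=\widetilde{[\tilde{T}\leftarrow x]}$, and conjugation interchanges rows with columns, swaps the two halves $L_0,L_1$ of the signature, and carries Young tableaux to Young tableaux, it suffices to prove that $[T\leftarrow x]$ is a Young tableau for an \emph{arbitrary} signed alphabet; the column-insertion statement then follows at once by passing to the conjugate alphabet $\tilde{L}$. So I would concentrate entirely on the row-insertion $T\leftarrow x$.

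I would follow the bumping route produced by Definition \ref{df: row-insertion}. Write $x_1:=x$ and, for each visited row $i$, let $x_i$ be the letter entering row $i$, let $c_i$ be the column where it is placed, and (in the rows treated by Step 3) let $y_i:=\tau(i,c_i)$ be the displaced entry, so that $x_{i+1}=y_i$. The procedure alters exactly the boxes $(i,c_i)$ for $i=1,\dots,r$, where rows $1,\dots,r-1$ are bumped and a new box is appended to the last visited row $r$ by Step 4. Since only rows $1,\dots,i$ have been touched when row $i+1$ is processed, every comparison against an ``old'' entry refers to the original tableau, whose conditions (i),(ii) are available throughout. Verifying that $[T\leftarrow x]$ is a Young tableau then amounts to checking (i) in each altered row and (ii) across every vertically adjacent pair of boxes at least one of which was altered; the shape stays a Ferrers--Young diagram because a single box is appended and $c_r=\la_r+1$ is forced to sit below the weakly wider rows above it.

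The first key step is the monotonicity of the route: $c_1\ge c_2\ge\cdots\ge c_r$. I would prove $c_{i+1}\le c_i$ by comparing $x_{i+1}=y_i=\tau(i,c_i)$ with the original entry $\tau(i+1,c_i)$ directly below it. Condition (ii) for the original tableau gives $y_i\le\tau(i+1,c_i)$, with equality only if $|y_i|=1$; feeding this into the parity-sensitive definition of $J$ in Step 1a/1b for row $i+1$ shows that $c_i$ already lies in $J$ whenever the box $(i+1,c_i)$ exists, whence $c_{i+1}=\min J\le c_i$, while if that box is absent then $c_{i+1}\le\la_{i+1}<c_i$ trivially. The second step is condition (i): because $c_i=\min J$, the left neighbour of the placed letter lies outside $J$ and the displaced entry $y_i$ lies inside $J$, and unwinding the cases $|x_i|=0,1$ yields the required (possibly strict) inequalities on both sides of $x_i$ within row $i$; the appended letter of row $r$ is handled by $J=\emptyset$.

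The heart of the matter is condition (ii) across altered boxes, which by the monotonicity splits into three local configurations. When two consecutive altered boxes share a column ($c_i=c_{i+1}$), the pair is $(x_i,y_i)$, and the bumping inequality $x_i\le y_i$ — strict exactly when $|x_i|=0$ — is precisely (ii). When an altered box $(i,c_i)$ sits above an unaltered one, I would compose $x_i\le y_i$ with the original column inequality $y_i\le\tau(i+1,c_i)$ and check that equality forces $|x_i|=1$. The delicate case, and the one I expect to be the main obstacle, is an unaltered box $(i,c)$ sitting above a newly bumped box $(i+1,c)$ with $c<c_i$: here the original row-monotonicity of row $i$ gives $\tau(i,c)\le\tau(i,c_i)=x_{i+1}$, but I must rule out the equality that would violate (ii), which demands strictness unless the upper entry is odd. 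I would do this by chaining parities: an equality would force $\tau(i,c)=x_i=y_i$, yet the bump relation $x_i\le y_i$ together with Step 1a/1b then forces simultaneously $|x_i|=0$ and $x_i<y_i$, a contradiction. Hence the forbidden equality never occurs and (ii) holds with strict slack in this case. Assembling the three configurations shows that $[T\leftarrow x]$ satisfies (i) and (ii), which completes the proof.
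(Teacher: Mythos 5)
Your proof is correct and takes essentially the same route as the paper's: reduce column-insertion to row-insertion via the conjugation duality, establish that the bumping columns weakly decrease, and verify conditions (i)--(ii) locally along the bumping route. The only cosmetic differences are that you prove the monotonicity $c_{i+1}\le c_i$ directly (by showing $c_i$ already lies in the set $J$ for row $i+1$) and spell out the three vertical configurations explicitly, whereas the paper reduces to two rows and obtains the same inequality by contradiction.
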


\begin{proof}
Put $T' := [T\leftarrow x]$. Clearly, there is no loss of generality on assuming
that $T'$ has just two rows, say $w'_1,w'_2$. Denote by $w_1,w_2$ the rows
of $T$ with $w_2$ possibly empty. By Definition \ref{df: row-insertion}
of the row-insertion process, it is plain that $w'_1,w'_2$ verify condition (i)
of Definition \ref{df: super semistandard Young tableau}. Then, we have to check
the shape and the columns of $T'$. If $x$ is appended at the end of $w_1$ there
is nothing to prove. Otherwise, define $y$ the entry of $w_1$ bumped by $x$ and
denote by $j,j'$ the columns where $y$ occurs in the rows $w_1,w'_2$ respectively.
We claim that $j' \leq j$. This implies that if $y$ is appended at the end
of the row $w_2$ then the length of $w_2$ is strictly less than the one of $w_1$.
Moreover, we have that $x \leq y$, with $x = y$ only if $|x| = 1$ by Definition
\ref{df: row-insertion} and $z \leq x$ for any entry $z$ of $w'_1$ at the right
or equal to $x$. By $j' \leq j$ this implies that the entry above $y$ in the
row $w'_1$ satisfies condition (ii) of Definition \ref{df: super semistandard
Young tableau}. By contradiction, assume now that $j < j'$ and let $z$ be the
entry of $w'_2$ below $x$. Since $T$ is a Young tableau and $z$ is below $y$
in $T$ we have that $y \leq z$, with $y = z$ only if $|y| = 1$. But $z$ is at
the left of $y$ in $w'_2$ and therefore $y \geq z$, with $y = z$ only if $|y| = 0$
which is impossible.

By Remark \ref{rem: row-column duality} it follows immediately that
$[x\rightarrow T]$ is also a Young tableau.
\end{proof}

Let $T',i$ be the tableau and the row-index defined as the output of the procedure
$T \leftarrow x$ and put $j$ the length of the row $i$ in $T'$. In other words,
$(i,j)$ is the position of the new box added to the shape of $T$ for obtaining
the shape of $T'$. Consider also $T'',i'$ the output of $T' \leftarrow x'$ and
let $j'$ be the length of the row $i'$ in $T''$. We have the following result.

\begin{prop}[Row-bumping lemma]
\label{prop: row-bumping lemma}
The following conditions are equivalent:
\begin{itemize}
\item[(a)] $x \leq x'$, with $x = x'$ only if $|x| = 0$,
\item[(b)] $j < j'$ (and hence $i \geq i'$).
\end{itemize}
\end{prop}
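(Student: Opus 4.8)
The plan is to analyze the two insertion routes directly, comparing them row by row. Write $v_1 := x$ and let $v_{k+1}$ denote the letter bumped by $v_k$ from row $k$ during the first insertion, so that $v_k$ settles in column $c_k$ of row $k$ (with $c_k := \lambda_k + 1$ when $v_k$ is merely appended); similarly write $v'_1 := x'$ and let $c'_k$ be the columns of the second route inside $T'$. The heart of the argument is the following invariant, proved by induction on $k$: if the letters entering row $k$ satisfy condition (a), namely $v_k \le v'_k$ with $v_k = v'_k$ only if $|v_k| = 0$, then $c'_k > c_k$; and if moreover both routes bump in row $k$, then the descending letters $v_{k+1}, v'_{k+1}$ again satisfy condition (a).

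First I would establish the base step, which already contains the whole combinatorial mechanism. Row $k$ of $T'$ coincides with row $k$ of $T$ except that $v_k$ now occupies column $c_k$; all entries strictly left of $c_k$ are $\le v_k$ (strictly if $|v_k|=1$) by Definition \ref{df: row-insertion}. Inserting $v'_k$, I would check directly from Steps 1a/1b that no column $p \le c_k$ enters the bumping set $J'$: for $p < c_k$ the entry is $\le v_k \le v'_k$, and for $p = c_k$ the entry is $v_k \le v'_k$; in each subcase the only way to land in $J'$ would force an equality $v_k = v'_k$ together with $|v'_k| = 1$, which condition (a) forbids. Hence $c'_k > c_k$. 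When both routes bump, $v'_{k+1} = \tau(k,c'_k)$ is the original entry in column $c'_k > c_k$, so condition (i) of Definition \ref{df: super semistandard Young tableau} along row $k$ gives $v_{k+1} = \tau(k,c_k) \le \tau(k,c'_k) = v'_{k+1}$, with equality only if the whole stretch is even, i.e. only if $|v_{k+1}| = 0$. This is exactly condition (a) for row $k+1$, so the induction proceeds identically in every row.

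Next I would extract the geometric conclusion. Standard reasoning using condition (ii) shows both routes move weakly left, so their columns are weakly decreasing down to the terminal appended box; in particular $j = c_i \le c_{i'}$ along the first route. If the first route appends in row $i$, then row $i$ of $T'$ has $v_i$ in its last cell with everything before it $\le v_i \le v'_i$, and the same subcase analysis as above shows $v'_i$ is also appended (never bumped), so the second route cannot descend below row $i$; this yields $i \ge i'$. For the column comparison, either $i' = i$, where the append analysis gives $j' = j+1$, or $i' < i$, where the second route appends in row $i'$ while the first still bumps there; then $j = c_i \le c_{i'} < c'_{i'} = j'$ by the weak decrease of the first route together with the invariant $c'_{i'} > c_{i'}$. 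In all cases $j < j'$, completing (a) $\Rightarrow$ (b).

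Finally, since conditions (a) and (b) are each one side of an exhaustive dichotomy, it suffices to prove the complementary implication. Under the negation of (a), that is $x' \le x$ with $x' = x$ only if $|x'| = 1$, I would run the dual induction: the same Steps 1a/1b now place column $c_k$ (holding $v_k$) inside the bumping set of $v'_k$, forcing $c'_k \le c_k$, a weakly left second route that terminates strictly below the first, hence $j' \le j$, i.e. the negation of (b). I expect the main obstacle to be precisely the sign bookkeeping in these bumping-set comparisons: the delicate point is that the ``equality only if even'' clause of (a) (respectively ``only if odd'' in the dual) is exactly what is needed to prevent a spurious bump at the cell where the two routes would otherwise collide, and one must also marshal the weak monotonicity of the routes to pin down the terminal box in the edge case $i' < i$.
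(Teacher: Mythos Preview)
Your proposal is correct and follows essentially the same route as the paper's proof: both introduce the bumping sequences for the two insertions, compare their column positions row by row, and establish the key inequality $c_k < c'_k$ (the paper's $j_q < j'_q$) by forward induction under hypothesis (a), then read off $i \ge i'$ and $j < j'$ from the terminal boxes. The only noticeable difference is in the converse: the paper argues (b) $\Rightarrow$ (a) directly by backward induction on $q$, starting from row $i'$ where $x'_{i'}$ sits at the end of its row (so $j_{i'} < j'_{i'} = j'$) and pulling the inequality up to $q = 1$, whereas you invoke the dichotomy and run a forward induction on the negation of (a) to obtain $c'_k \le c_k$ and hence $j' \le j$. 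This is a stylistic variation rather than a genuinely different idea.
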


\begin{proof}
For any $q$ ($1\leq q \leq i$) we define an entry $x_q$ of the $q$-th
row of $T'$ as follows:
\begin{itemize}
\item[(i)]  $x_1:= x$,
\item[(ii)] $x_{q+1}$ is the entry of the row $q$ bumped by the
element $x_q$.
\end{itemize}
We call $x_1,x_2,\ldots,x_i$ the {\em bumping sequence} defined by
the row-insertion $T \leftarrow x$. By Definition
\ref{df: row-insertion} we have that
$
x_q \leq x_{q+1},\ \mbox{with}\ x_q = x_{q+1}\ \mbox{only if}\ |x_q| = 1.
$
Denote by $j_q$ the column where the entry $x_q$ occurs in the row $q$
of $T'$. As proved in the argument of Proposition \ref{prop: correctness
of insertion} we have that $j_q \geq j_{q+1}$.

For the procedure $T' \leftarrow x'$ we define in the same way the bumping
sequence $x'_1,x'_2,\ldots,x'_{i'}$ and denote by $j'_q$ the column where
the entry $x'_q$ is placed on the $q$-th row of $T''$.
The basic argument of the proof consists in observing that the following
statements are equivalent:
\begin{itemize}
\item[($\alpha$)] $x_q \leq x'_q$, with $x_q = x'_q$ only if $|x_q| = 0$,
\item[($\beta$)]  $j_q < j'_q$.
\end{itemize}
In fact, by Definition \ref{df: row-insertion} of row-insertion from ($\alpha$)
it follows that ($\beta$) holds. Moreover, since $T''$ is a Young tableau
we have that ($\beta$) implies ($\alpha$).

Let us assume the condition (a) holds. Since $x \leq x'$, with $x = x'$ only if
$|x| = 0$ we have that $j_1 < j'_1$. By induction on the row-index $q$,
we prove that $j_q < j'_q$ for any $q$ ($1\leq q\leq \min(i,i')$). In fact,
it is sufficient to note that before we insert the elements $x_q,x'_q$
in the row $q$ of the tableau $T$, we have the elements $x_{q+1},x'_{q+1}$
in those positions. By assuming $j_q < j'_q$ we have therefore
$x_{q+1} \leq x'_{q+1}$, where $x_{q+1} = x'_{q+1}$ implies that $|x_{q+1}| = 0$.
In the row $q+1$ where the letters $x_{q+1},x'_{q+1}$ are inserted,
this means that $j_{q+1} < j'_{q+1}$.

If we show that $i' \leq i$ we can conclude that
$
j' = j'_{i'} > j_{i'} \geq j_i = j.
$
Note that the entry $x_i$ is placed at the end of the $i$-th row of the
tableau $T'$. The same happens to the entry $x'_{i'}$ of $T''$. Moreover,
the shapes of $T',T''$ differ only for the box of position $(i',j')$.
If we suppose that $i \leq i'$, by $j = j_i < j'_i$ we have that
$i' = i$ and $j' = j'_i = j + 1$. This implies that $i' \leq i$.

\smallskip
We assume now (b). From $j < j'$ it follows that $i \geq i'$.
Since the entry $x'_{i'}$ is the last one of the row $i'$ of $T''$
we have $j_{i'} < j'_{i'} = j'$. By backward induction on $q$
($i' \geq q \geq 1$) it is proved that $j_q < j'_q$.
In particular, one has $j_1 < j'_1$ and hence $x \leq x'$, with $x = x'$
only if $|x| = 0$.
\end{proof}

Let $T',j$ be the tableau and the column-index that are the output of the
procedure $x \rightarrow T$ and put $i$ the length of the column $j$ in $T'$.
Moreover, denote by $T'',j'$ the output of $x' \rightarrow T'$ and let $i'$
be the length of the row $j'$ in $T''$. A ``dual result'' of Proposition
\ref{prop: row-bumping lemma} can be obtained immediately by Remark
\ref{rem: row-column duality}.

\begin{prop}[Column-bumping lemma]
\label{prop: column-bumping lemma}
We have the following equivalent conditions:
\begin{itemize}
\item[(a)] $x \leq x'$, with $x = x'$ only if $|x| = 1$,
\item[(b)] $i < i'$ (and hence $j \geq j'$).
\end{itemize}
\end{prop}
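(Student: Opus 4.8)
The plan is to deduce this statement directly from the Row-bumping lemma (Proposition \ref{prop: row-bumping lemma}) via the conjugation duality of Remark \ref{rem: row-column duality}. The crucial input is the identity $[x\rightarrow T] = \widetilde{[\tilde{T}\leftarrow x]}$, which exhibits a column-insertion in $L$ as a conjugated row-insertion in the conjugate alphabet $\tilde{L}$. Because conjugation only interchanges the two signature classes and leaves the underlying total order untouched, the relation $x \leq x'$ has the same meaning in $L$ and in $\tilde{L}$, whereas the predicate $|x| = 1$ read in $L$ becomes $|x| = 0$ read in $\tilde{L}$.

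First I would rewrite the two column-insertions as row-insertions in $\tilde{L}$. Since conjugation is an involution, the identity above gives $\widetilde{T'} = [\tilde{T}\leftarrow x]$ and $\widetilde{T''} = [\widetilde{T'}\leftarrow x']$, so that the pair of column-insertions $x\rightarrow T$, $x'\rightarrow T'$ corresponds to the pair of consecutive row-insertions $\tilde{T}\leftarrow x$, $\widetilde{T'}\leftarrow x'$ carried out in $\tilde{L}$. Next I would track the new boxes. The column-insertion $x\rightarrow T$ creates a box at position $(i,j)$ of $T'$, and under conjugation this box sits at position $(j,i)$ of $\widetilde{T'}$; hence the row-insertion $\tilde{T}\leftarrow x$ has output row-index $j$, with its new box lying in column $i$. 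The same reasoning applied to $x'\rightarrow T'$ shows that $\widetilde{T'}\leftarrow x'$ has output row-index $j'$, with its new box in column $i'$.

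Finally I would invoke Proposition \ref{prop: row-bumping lemma} for the consecutive row-insertions $\tilde{T}\leftarrow x$ and $\widetilde{T'}\leftarrow x'$ in the alphabet $\tilde{L}$. In the notation of that proposition the columns of the new boxes are $i,i'$ and the row-indices are $j,j'$, so the lemma states that $x \leq x'$, with $x = x'$ only if $|x| = 0$ in $\tilde{L}$, is equivalent to $i < i'$ (and hence $j \geq j'$). Translating the signature condition back to $L$ replaces ``$|x| = 0$ in $\tilde{L}$'' by ``$|x| = 1$ in $L$'', which is precisely the equivalence of conditions (a) and (b) asserted here.

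The only delicate point is the bookkeeping of the middle step: one must verify that conjugation carries the new box of a column-insertion to the new box of the corresponding row-insertion, so that the roles of $(i,j)$ and $(j,i)$, together with the weak-versus-strict signature inequalities, are matched correctly. Once these correspondences are pinned down the conclusion is purely formal, requiring no combinatorial argument beyond the row case.
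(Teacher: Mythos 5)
Your proposal is correct and follows exactly the route the paper takes: the paper simply asserts that the column-bumping lemma "can be obtained immediately by Remark \ref{rem: row-column duality}" from Proposition \ref{prop: row-bumping lemma}, and your argument is a careful write-up of that same deduction, with the box-tracking and signature translation correctly handled.
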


\section{The super plactic monoid}

While Schensted's construction \cite{Schensted61} was originally made in order
to count some invariants of sequences of symbols, it was Knuth \cite{Knuth70}
who refined the insertion algorithm revealing the inner structure of it. Then,
there were Lascoux and Sch\"utzen\-berger \cite{LascouxSchutz81} who connected
it to a relevant algebraic structure they called the {\em plactic monoid}. Here,
we are going to generalize that structure to the case of a signed alphabet.

Let $L$ be an alphabet and denote by $\L$ the free monoid generated by $L$.
It consists of all words $w := x_1 \cdots x_n$ ($x_i\in L$)
endowed with juxtaposition product. We admit the empty word belonging to $\L$
and behaving as the unity for the product.

\begin{df}
A monoid $\mc{M}$ is said a {\em $\Z_2$-graded monoid} or a {\em supermonoid}
if a map $\abs{\cdot}:\mc{M} \to \Z_2$ is given such that $|u\cdot v| =
|u| + |v|$, for any $u,v\in\mc{M}$. We call $|u|$ the {\em $\Z_2$-degree}
of the element $u$.
\end{df}

If $L$ is a signed alphabet, note that the free monoid $\L$ is $\Z_2$-graded
simply by putting $|w|:= |x_1| + \ldots + |x_n|$, for any word $w =
x_1 \cdots x_n$.

Any Young tableau $T$ defines a word $w(T)$ of $\L$ in a simple way:
start collecting the rows of $T$ from its bottom upward and multiply those
sequences of letters by juxtaposition. For instance, if we consider the tableau:
\[
T := \begin{matrix}
1 & 1 & 1 & 1 & 4 & 5 \\
2 & 3 & 3 & 4 \\
2 & 4 \\
2 & 4 \\
2 \\
2 \\
3
\end{matrix}
\]
then it defines the word:
\[
w(T) := 3\,2\,2\,2\,4\,2\,4\,2\,3\,3\,4\,1\,1\,1\,1\,4\,5.
\]
Moreover, any word $w = x_1 \dots x_n$ defines a Young tableau $T(w)$
by inserting its letters progressively starting from the empty tableau.
Choosing the insertion by rows, we define:
\[
T(w) := [[[\varnothing\leftarrow x_1]\leftarrow x_2]\dots \leftarrow x_n]
\]
where $\varnothing$ denotes the empty tableau.

\begin{prop}
\label{step0}
Let $L$ be a signed alphabet. For any Young tableau $T$, it holds:
\[
T(w(T)) = T
\]
\end{prop}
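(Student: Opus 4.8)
The plan is to reconstruct $T$ one row at a time, reading its rows from the bottom upward exactly in the order in which $w(T)$ is assembled. Write the rows of $T$ as $R_1,\ldots,R_m$, with $R_1$ the top row, so that $R_k = \tau(k,1)\cdots\tau(k,\la_k)$ and $w(T) = R_m R_{m-1}\cdots R_1$. For $1\le k\le m+1$ let $P_k$ denote the Young tableau formed by rows $k,k+1,\ldots,m$ of $T$ (with $P_{m+1}$ empty); each $P_k$ is a genuine Young tableau, being a bottom piece of $T$. I would prove, by downward induction on $k$, that inserting the letters of $R_m R_{m-1}\cdots R_k$ into the empty tableau yields exactly $P_k$; taking $k=1$ then gives $T(w(T)) = P_1 = T$. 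The base case $k=m+1$ is the empty word producing the empty tableau. For the inductive step, assuming that inserting $R_m\cdots R_{k+1}$ produces $P_{k+1}$, it remains to show that row-inserting the letters of $R_k$ (left to right) into $P_{k+1}$ produces $P_k$. Since $P_k$ is precisely the array obtained by placing $R_k$ as a new top row above $P_{k+1}$, the whole matter reduces to the following key lemma.

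\emph{Key Lemma.} Let $U$ be a Young tableau with top row $\rho = v_1\cdots v_q$, and let $R = u_1\cdots u_p$ be a row (so $u_j\le u_{j+1}$, with $u_j = u_{j+1}$ only if $|u_j|=0$) such that the array $V$ obtained by placing $R$ above $U$ is a Young tableau. Then successively row-inserting $u_1,\ldots,u_p$ into $U$ yields $V$. I would prove this by induction on the number of rows of $U$. The base case (empty $U$) is the observation that a row-word inserted into the empty tableau reconstructs itself, since by Definition \ref{df: row-insertion} each letter is appended at the end of the first row without bumping.

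For the inductive step I would analyse the action on the top row. Using the column condition (ii) of Definition \ref{df: super semistandard Young tableau} for $V$ (namely $u_j\le v_j$ with equality only if $|u_j|=1$) together with the row condition on $R$, I would verify that when $u_j$ is inserted it never re-bumps a previously placed $u_i$ and bumps exactly $v_j$, for each $j\le q$, while the remaining letters $u_{q+1},\ldots,u_p$ are merely appended. Thus the top row becomes $R$, whereas the entries $v_1,\ldots,v_q$ are displaced downward in this order, each carried through its entire cascade before the next letter of $R$ is treated. Consequently the rows below are modified exactly as if one row-inserted the word $v_1\cdots v_q = \rho$ into $U^-$, the tableau $U$ with its top row removed. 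Since placing $\rho$ atop $U^-$ returns the Young tableau $U$, the inductive hypothesis applied to $U^-$ (which has one fewer row) shows that these lower rows reconstruct $U$. Hence the final tableau has top row $R$ over $U$, that is, it equals $V$.

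The main obstacle is the signed bookkeeping in the top-row analysis: one must check, in each of the cases $|u_j|=0$ and $|u_j|=1$, that the set $J$ computed in Steps 1a/1b of Definition \ref{df: row-insertion} has minimum exactly $j$, i.e. that $u_j$ clears every earlier $u_i$ and catches $v_j$. The former uses that $u_i=u_j$ would force equal $\Z_2$-degrees, so the strict/weak inequalities of conditions (i) and (ii) stay compatible; the latter uses the column condition of $V$. This interplay between the strict and weak inequalities dictated by the signature is exactly where the classical argument must be adapted, and it is the only delicate point; everything else is routine once the Key Lemma and the two inductions are in place. Throughout, the fact that each intermediate insertion is again a Young tableau is guaranteed by Proposition \ref{prop: correctness of insertion}, while Proposition \ref{prop: row-bumping lemma} may be invoked to confirm that within the insertion of the single row $R_k$ the newly created boxes travel strictly rightward, corroborating the shape claim $P_k$.
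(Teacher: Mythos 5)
Your proposal is correct and follows essentially the same route as the paper: induction on the number of rows, reducing to the claim that row-inserting the top row $R_k$ into the tableau of the remaining rows reconstructs $T$, with the signed case analysis showing that $u_j$ clears the previously placed $u_1,\dots,u_{j-1}$ and bumps exactly $v_j$. The only cosmetic difference is that the paper tracks each bumping chain straight down its column ("the bumping sequence defined by $x_i$ is all in the $i$-th column"), whereas you package the same cascade as a recursive application of your Key Lemma to the displaced row word $v_1\cdots v_q$, which is an equivalent and if anything slightly tidier bookkeeping of the identical argument.
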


\begin{proof}
Denote by $w_i$ the $i$-th row of $T$ and let $T'$ be the tableau obtained
by considering all the rows of $T$ but $w_1$. By induction on the number
of rows we can assume that $T(w(T')) = T'$. Note that $w(T) = w(T') w_1$.
Hence, if $w_1 = x_1 \cdots x_n$ we have:
\[
T(w(T)) = [[[T'\leftarrow x_1]\leftarrow x_2]\dots \leftarrow x_n].
\]
We claim that the bumping sequence defined by the letter $x_i$ is all
in the $i$-th column. This clearly implies that $T(w(T)) = T$.
Let $T_i := [[[T'\leftarrow x_1]\leftarrow x_2]\dots \leftarrow x_i]$
and say $w_2 = y_1 \dots y_m$ ($m \leq n$). By induction on $i$,
the first row of the tableau $T_i$ is $x_1 \cdots x_i y_{i+1} \cdots y_m$
if $i < m$ or $x_1 \cdots x_i$ otherwise. By applying the row-insertion
$T_i\leftarrow x_{i+1}$ we have that $x_{i+1}$ bumps $y_{i+1}$ if
$i < m$ or $x_{i+1}$ is appended at the end of the first row.
In fact, since $T$ is a Young tableau we have that $x_i \leq x_{i+1}$,
with $x_i = x_{i+1}$ only if $|x_i| = 0$ and $x_{i+1} \leq y_{i+1}$,
where $x_{i+1} = y_{i+1}$ only if $|x_{i+1}| = 1$. If $i < m$ then
$y_{i+1}$ has to be inserted in the second row. Similar arguments apply
to this letter that has to be placed in the column $i+1$ and so on
for the remaining bumping sequence.
\end{proof}

It is almost a natural question to wonder when different words
give rise to the same Young tableau. Note that there are words $w$ such that $w(T(w)) \neq w$. We show that any pair of words $w,w(T(w))$ is related by a congruence on the monoid $\L$ which is compatible with its $\Z_2$-grading.

\begin{df}
We say that $w = x_1 \cdots x_n$ is a {\em row word} if we have that
$x_i \leq x_{i+1}$, with $x_i = x_{i+1}$ only if $|x_i| = 0$. In other words,
we have $w = w(T)$ where $T$ is a Young tableau whose shape is a row.
In a similar way we can define the notion of {\em column word}. Note that
all the words of length $\leq 2$ are clearly row or a column words.
\end{df}

\begin{df}
Denote by $\sim$ the equivalence relation on $\L$ defined by the map
$w \mapsto T(w)$, that is
$
w \sim w'\ \mbox{if and only if}\ T(w) = T(w').
$
\end{df}

Clearly, if $w,w'$ are row or column words then $w \sim w'$ if and only if
$w = w'$. This happens in particular if the length of these words is $\leq 2$.
So, the first non-trivial relations can be found for words of length 3 that
correspond to tableaux of shape $(2,1)$. In fact, a straightforward computation
provides the following:

\begin{lem}
\label{knuthrel}
Let $x \leq y \leq z$ be letters of the alphabet $L$. We have:
\begin{itemize}
\item[(i)]
$
T(x z y) = T(z x y) =
\begin{array}{l}
\begin{array}{|c|c|}
\hline
x & y \\
\hline
\end{array}
\\
\begin{array}{|c|}
\hspace{.6pt} z \\
\hline
\end{array}
\end{array}
$
with $x = y$ only if $|y| = 0$ and $y = z$ only if $|y| = 1$,
\item[(ii)]
$
T(y x z) = T(y z x) =
\begin{array}{l}
\begin{array}{|c|c|}
\hline
x & z \\
\hline
\end{array}
\\
\begin{array}{|c|}
\hspace{.45pt} y \\
\hline
\end{array}
\end{array}
$
with $x = y$ only if $|y| = 1$ and $y = z$ only if $|y| = 0$.
\end{itemize}
\end{lem}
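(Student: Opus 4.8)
The plan is to establish both identities by running the row-insertion procedure of Definition~\ref{df: row-insertion} directly on each of the four words, recording the tableau obtained after each of the three insertions. Because the words have length three, each computation involves at most two non-trivial bumping steps, so everything reduces to a short case analysis on the $\Z_2$-degrees $|x|,|y|,|z|$ and on which of the inequalities in $x\leq y\leq z$ are strict. I will carry out part (i) in full and then obtain part (ii) by the same method; alternatively, part (ii) follows from part (i) through the conjugate-alphabet duality of Remark~\ref{rem: row-column duality}, since passing to $\tilde L$ flips every $\Z_2$-degree and thereby interchanges the two side conditions as well as the two shape-$(2,1)$ tableaux, which are transposes of one another.

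For part (i) I would compute $T(xzy)$ as follows: inserting $x$ gives a single box, inserting $z$ appends it to the right of $x$ to form the row $xz$, and inserting $y$ bumps $z$ down into a freshly created second row, yielding the displayed tableau with first row $xy$ and second row $z$. For $T(zxy)$ the order of the first two insertions is reversed: inserting $x$ into the box $z$ now bumps $z$ into the second row, producing the column with $x$ above $z$, after which $y$ is appended to the right of $x$; this gives the same tableau. Hence $T(xzy)=T(zxy)$ equals the asserted array. Part (ii) runs symmetrically: in $T(yxz)$ the letter $x$ bumps $y$ and then $z$ is appended, whereas in $T(yzx)$ the letter $z$ is appended after $y$ and then $x$ bumps $y$, so both produce the tableau with first row $xz$ and second row $y$.

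The only real content lies in justifying each append-versus-bump decision in Steps~1a--1b, and this is precisely where the hypotheses are consumed. The subtle points are the degenerate configurations in which two of the letters coincide. For example, in $T(xzy)$ the step inserting $z$ must append rather than bump $x$, which is legitimate because $x=z$ is impossible under the hypotheses: it would force $x=y=z$ and hence demand $|y|=0$ and $|y|=1$ at once. Likewise $y$ must bump $z$ rather than be appended, and here the side condition ``$y=z$ only if $|y|=1$'' is exactly what excludes the case $y=z$ with $|y|=0$, in which Step~1a would instead return $J=\emptyset$ and collapse the result to a single row of length three. I expect the main (and essentially the only) obstacle to be the disciplined bookkeeping of these boundary cases across all the sign patterns: one must check in each of the four computations that Steps~1a and 1b single out the intended box, and that the two side conditions stated in (i) and in (ii) are exactly the constraints needed both for the insertion to follow the claimed path and for the displayed array to satisfy conditions (i) and (ii) of Definition~\ref{df: super semistandard Young tableau}.
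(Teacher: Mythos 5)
Your proposal is correct and matches the paper's intent exactly: the paper offers no written proof, introducing the lemma only with ``a straightforward computation provides the following,'' and your case-by-case run of the row-insertion algorithm on the four words, with the boundary cases $x=y$, $y=z$, $x=z$ resolved by the stated sign conditions, is precisely that computation carried out. (The parenthetical claim that part~(ii) could instead be deduced from part~(i) via the conjugate-alphabet duality is not needed and would require relating row-insertion of $yxz$ to row-insertion of $xzy$ over $\tilde L$, which Remark~\ref{rem: row-column duality} does not directly provide; but since you verify part~(ii) by direct insertion anyway, nothing is lost.)
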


\smallskip
\begin{df}
Denote by $\equiv$ the congruence on the monoid $\L$ generated
by the following relations:
\begin{itemize}
\item[(K1)] $x z y \equiv z x y$, with $x = y$ only if $|y| = 0$
and $y = z$ only if $|y| = 1$,
\item[(K2)] $y x z \equiv y z x$, with $x = y$ only if $|y| = 1$
and $y = z$ only if $|y| = 0$,
\end{itemize}
for any triple $x\leq y\leq z$ of elements of $L$. Note that in the
classic case, that is for $L = L_0$, we have that (K1),(K2) are exactly
the {\em Knuth's relations} \cite{Knuth70}. Then, we put:
\[
\Pl(L) := \L/\equiv
\]
and we call such monoid the {\em super plactic monoid} defined by the signed
alphabet $L$. Since the relations (K1),(K2) are both $\Z_2$-homogeneous we have
that the quotient $\Pl(L)$ is actually a supermonoid.
\end{df}

We are going to show now that the monoid $\Pl(L)$ coincide with the quotient
set $\L/\sim$. This allows to define on the set of super semistandard Young
tableaux a structure of $\Z_2$-graded monoid which can be identified with
$\Pl(L)$ by means of the map $w \mapsto T(w)$. The arguments we use for obtaining
this result follow closely the approach introduced in \cite{Lothaire01}, chapter 5,
for the non-signed case. We start with some preparatory results.

\begin{prop}
\label{step1}
Let $L$ be a signed alphabet. For all words $w\in\L$, we have
$w \equiv w(T(w))$.
\end{prop}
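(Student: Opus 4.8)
The plan is to argue by induction on the length of $w$, reducing everything to the effect of a single insertion. Writing $w = w' x$ with $x \in L$, the inductive hypothesis gives $w' \equiv w(T(w'))$, and since $\equiv$ is a congruence we obtain $w = w'x \equiv w(T(w'))\,x$. As $T(w) = [T(w') \leftarrow x]$, the proposition follows once I establish the following \emph{insertion lemma}: for every Young tableau $S$ and every $x \in L$ one has $w(S)\,x \equiv w([S \leftarrow x])$. The base case (words of length $\le 1$) is immediate, since then $T(w)$ is a single box and $w(T(w)) = w$.

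To prove the insertion lemma I first isolate its one-row version. Let $u = a_1 \cdots a_k$ be a row word and suppose the row-insertion of $x$ into $u$ bumps the entry $y = a_j$, producing the row $u' = a_1 \cdots a_{j-1}\, x\, a_{j+1} \cdots a_k$; I claim $u\,x \equiv y\,u'$ (the case in which $x$ is merely appended is trivial, since then $u x = u'$ with no bumped letter). I would prove this claim in two stages, each a chain of elementary applications of (K1) and (K2). In the first stage I slide $x$ leftward through the tail $a_{j+1} \cdots a_k$: as $x$ is strictly smaller than every tail entry, each step moves $x$ past one entry by a relation of type (K2) on a triple whose smallest element is $x$, yielding $u\,x \equiv a_1 \cdots a_j\, x\, a_{j+1} \cdots a_k$. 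In the second stage I slide the bumped entry $a_j$ leftward through the prefix $a_1 \cdots a_{j-1}$ by relations of type (K1), reaching $a_j\, a_1 \cdots a_{j-1}\, x\, a_{j+1} \cdots a_k = y\,u'$. The delicate point, and the part where the signed setting genuinely enters, is verifying that at every step the degree-dependent strictness conditions attached to (K1) and (K2) are met: these follow from the defining inequalities of the row-insertion (for $|x| = 0$ the bumped $a_j$ satisfies $a_{j-1} \le x < a_j$, while for $|x| = 1$ one has $a_{j-1} < x \le a_j$) together with the row-word condition governing when consecutive entries may coincide. For $k = 2$ this one-row claim is exactly Lemma \ref{knuthrel}, which serves as a useful guide.

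With the one-row claim in hand, the insertion lemma follows by tracing the bumping cascade through the tableau. Writing the rows of $S$ as $R_1$ (top), $\dots$, $R_m$ (bottom), so that $w(S) = R_m \cdots R_2 R_1$, I apply the one-row claim to the rightmost factor $R_1 x$, replacing it by $\xi_1 R_1'$, where $\xi_1$ is the letter bumped from the first row and $R_1'$ is the new first row; by congruence $w(S)\,x \equiv R_m \cdots R_2\, \xi_1\, R_1'$. Repeating the claim on $R_2 \xi_1$, then on $R_3 \xi_2$, and so on down the tableau, each displaced letter is pushed one row lower exactly as the insertion algorithm prescribes, and the accumulated equivalence becomes $w(S)\,x \equiv w([S \leftarrow x])$ once the cascade terminates (either the last displaced letter is appended to some row, or it starts a new bottom row, which correctly appears first in the reading word). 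Since every move used is a relation of type (K1) or (K2), the whole chain is a congruence, and combining the insertion lemma with the inductive step completes the argument. I expect the main obstacle to be the bookkeeping of the strictness conditions in the one-row claim; the cascade assembly and the induction are routine.
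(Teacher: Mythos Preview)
Your proposal is correct and follows essentially the same route as the paper: induction on length, reduction via the congruence to the one-row case, then sliding $x$ leftward through the tail with repeated applications of (K2) and sliding the bumped entry leftward through the prefix with repeated applications of (K1), exactly as the paper does (the paper writes $w = u\,y\,v$ and shows $u\,y\,v\,x \equiv u\,y\,x\,v \equiv y\,u\,x\,v$). Your identification of the strictness bookkeeping as the only nontrivial point, governed by the insertion inequalities $a_{j-1}\le x< a_j$ (when $|x|=0$) or $a_{j-1}< x\le a_j$ (when $|x|=1$) together with the row-word condition, matches the paper's case analysis on $|x|$.
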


\begin{proof}
We argue by induction on the length of the word $w$. To simplify the notation,
we use here the same letter $T$ for denoting a tableau and its corresponding
word $w(T)$. If the length of $w$ is $\leq 3$ then either $w$ is a row or column
word, or it occurs in the relations (K1),(K2). Hence, by Lemma \ref{knuthrel}
we conclude for this case that $w \equiv T(w)$.

Now, assume that $w \equiv T(w)$ and let $x$ be an element of $L$.
We have to prove that $w x \equiv T(w x)$ that is $T(w) x \equiv T(w x)$.
Note that:
\[
T(w x) = [T(w) \leftarrow x].
\]
Since the procedure $T(w) \leftarrow x$ is defined row-by-row and being
$\equiv$ a congruence, we need only to consider the case when $w$ is
a row and hence $T(w) = w$. Then, we have to prove that $w x$ is congruent
to the word obtained by row-inserting $x$ in $T(w)$. Note immediately
that if $w x$ is a row then $T(w x) = w x$. Otherwise, we have two cases
depending on the value of $|x|$.

If $|x| = 1$ then $T(w x) = y w'$, where $y$ is the leftmost entry of $w$
among those $\geq x$ and $w'$ is obtained from $w$ by replacing $y$ with $x$.
By putting $w = u y v$ we have hence that $w x = u y v x$ and $T(w x) = y u x v$.
Note that if $a,b$ are any letters of the words $u,v$ respectively then $a < x \leq y\leq b$. Moreover, one has $y = b$ only if $|y| = 0$ since
$y v$ is a row. Then, by applying iteratively the congruence (K2) we get:
\[
w x = u y v x \equiv u y x v.
\]
Making use of the congruence (K1) we are able to conclude that:
\[
u y x v \equiv y u x v = T(w x).
\]
In case $|x| = 0$, one has $a < x < y \leqslant b$, and in a similar way
we argue that $wx\approx T(wx)$.

In general, $T(m)$ is a product $w_nw_{n-1}\dots w_2w_1$ of rows $w_i$ and,
since the insertion $[T(m)\leftarrow x]$ is through rows and $\approx$ is a congruence,
one can apply iteratively what has been done for one row and go through the general case.
\end{proof}

Let $v,w$ be words of $\L$ and let $w = x_1\ldots x_n$. We say
that the word $v$ is {\em extracted by $w$} if $v = x_{i_1} \cdots
x_{i_m}$ with $1 \leq i_1 < \ldots < i_m \leq n$.
\begin{df}
For any $k \in\N$, we denote by $l_k(w)$ the maximal number which
can be obtained as the sum of the lengths of $k$ row words that
are disjoint and extracted by $w$. In the same way, we define
$\til_k(w)$ as the maximal value among the sums of the lengths of
$k$ column words disjoint and extracted by $w$.
\end{df}
Clearly, the integers $l_k(w)$ and $\til_k(w)$ are the
super-analogues of {\em Greene's row and column invariants}
\cite{Greene74}.

\smallskip
For instance, if $L_0 = \{1,2,4\},L_1 = \{3,5\}$ and $w = 1 2 3 3 4 5 5$
we have:
\[
\begin{array}{l}
l_1(w) = 5,\ l_2(w) = l_3(w) = \ldots = 7, \\
\til_1(w) = 2,\ \til_2(w) = 4,\ \til_3(w) = 5,\ \til_4(w) = 6,\
\til_5(w) = \til_6(w) = \ldots = 7.
\end{array}
\]

The motivation to call the above numbers ``invariants'' is that they stay
stable over the plactic classes.

\begin{prop}
\label{step2}
If $w \equiv w'$ then $l_k(w) = l_k(w')$ for any $k$.
\end{prop}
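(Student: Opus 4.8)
The plan is to show that $l_k$ is unchanged by a single application of one of the generating relations (K1) or (K2), from which the general statement follows by transitivity, since $w \equiv w'$ means that $w$ and $w'$ are joined by a finite chain of such elementary moves. Because the relations are symmetric (the reverse of a (K1)-move is again a (K1)-move, and similarly for (K2)), it is enough to prove the single inequality $l_k(w) \le l_k(w')$ whenever $w' = \alpha\, s'\, \beta$ is obtained from $w = \alpha\, s\, \beta$ by rewriting a length-three factor $s$ according to (K1) or (K2); applying this to the reverse move then yields the opposite inequality and hence equality. In both relations exactly one of the three positions of the window, say $p,p+1,p+2$, retains its letter (the ``pivot'' $y$) while the other two exchange their letters, and all letters outside the window are literally the same in $w$ and $w'$.

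First I would fix an optimal family $v_1,\dots,v_k$ of pairwise disjoint row words extracted from $w$ with $\sum_i |v_i| = l_k(w)$, and try to produce from it a family of $k$ disjoint row words extracted from $w'$ of the same total length. Since the letters outside the window are identical and are used identically, the portions of the $v_i$ lying outside $\{p,p+1,p+2\}$ may be kept verbatim, so the whole problem is local: I only need to re-route the (at most three) steps of the family that meet the window. The preparatory computation is to record, using $x \le y \le z$ and the parity constraints carried by (K1),(K2) (for instance ``$y = z$ only if $|y| = 1$''), exactly which subsets of the window are coverable by a single row word in $w$ and which are coverable in $w'$. For (K1) these turn out to be the pairs $\{p,p+1\}$ and $\{p,p+2\}$ in $w$ versus only $\{p+1,p+2\}$ in $w'$ (no single row word covering all three positions in either word), and a mirror-image list holds for (K2); the strictness conditions in the definition of row word are precisely what the parity hypotheses of (K1),(K2) guarantee.

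With this in hand I would run a case analysis on the partition of the window positions induced by the optimal family, i.e.\ on which positions are used and whether they are used by the same or by distinct $v_i$. The ``separated'' configurations (singletons, or a pair that remains coverable after the move) are re-routed directly: a chain passing through a swapped position is shifted onto the position now carrying the same letter, and one checks that its external predecessor and successor still bracket the new position both in value and in index. The delicate case for (K1) is when one chain covers the pair $\{p,p+1\}$ (letters $x,z$) while a second chain covers the pivot position $p+2$ (letter $y$); here the naive reassignments can both fail, precisely when the pivot chain satisfies $x < a_B \le y \le b_B < z$ for its neighbours $a_B,b_B$. I expect this to be the main obstacle, and I would resolve it by a \emph{crossing} argument: splice the head of the first chain to $x$ at $p+1$, then to $y$ at $p+2$, then to the tail of the pivot chain, and conversely splice the head of the pivot chain to $z$ at $p$ and then to the tail of the first chain. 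One verifies that both spliced sequences are again row words (using $a_A \le x \le y \le b_B$ and $a_B \le z \le b_A$, with strictness supplied by the parity conditions), that they are disjoint, that together they still cover all three window positions, and that the total length is unchanged. The analysis for (K2) is entirely analogous, with the roles of the left and right ends of the window interchanged, which completes the verification that $l_k(w) = l_k(w')$.
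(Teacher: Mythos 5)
Your proposal is correct and follows essentially the same route as the paper's proof: reduce to a single Knuth move, observe that only the three-letter window needs re-routing, and resolve the one genuinely problematic configuration (a chain through $x,z$ together with a chain through the pivot $y$) by exchanging tails, which is exactly the paper's replacement of the pair $(u'xzv',\,u''yv'')$ by $(u'xyv'',\,u''zv')$. The only difference is organizational: you prove the single inequality $l_k(w)\le l_k(w')$ for both directions of each move, whereas the paper gets $l_k(w)\ge l_k(w')$ directly from the fact that every row word extractable from $zxy$ is extractable from $xzy$.
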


\begin{proof}
We can assume that the words $w,w'$ are congruent by means of a single generating relation, say (K1). Then:
\[
w = u x z y v,\ w' = u z x y v,
\]
with $u,v$ words and $x \leq y \leq z$ letters such that $x = y$ only if
$|y| = 0$ and $y = z$ only if $|y| = 1$ (hence $x \neq z$). Note that
two row words can be extracted by the word $x z y$, precisely $x y$ and $x z$.
Moreover, by the word $z x y$ we can extract just $x y$. Then, all the words
extracted by $w'$ can be also extracted by $w$ and hence $l_k(w) \geq l_k(w')$.

Assume now that $(w_1,\ldots,w_k)$ is a $k$-tuple of row words that are
disjoint and extracted by $w$. We want to prove that there is a $k$-tuple
$(w'_1,\ldots,w'_k)$ of row words disjoint and extracted by $w'$ such that
the sum of the lengths of the $w'_i$ is equal to the corresponding sum for the $w_i$.
In this case, in fact, we get clearly $l_k(w) \leq l_k(w')$. Note that the word
$w_i$ is extracted also by $w'$ unless that $w_i = u' x z v'$ with $y \neq z$
and $u',v'$ words extracted by $u,v$. If the letter $y$ does not occur in any
of the words $w_j$ ($i \neq j)$ then we can obtain the $k$-tuple
$(w'_1,\ldots,w'_k)$ from $(w_1,\ldots,w_k)$ simply by substituting the row
word $w_i$ with $w'_i = u' x y v'$. Otherwise, if we have $w_j = u'' y v''$ then
we replace the pair $(w_i,w_j)$ with $w'_i = u' x y v''$ and $w'_j = u'' z v'$.
In a similar way we argue for the case when $w \equiv w'$ by means of (K2).
\end{proof}

\begin{thm}[Greene's theorem]
\label{greene}
Let $L$ be a signed alphabet. Let $w$ be a word and denote by $\lambda =
(\lambda_1,\ldots,\lambda_r)$ the shape of the tableau $T(w)$. Moreover,
consider $\tila = (\tila_1,\ldots,\tila_s)$ the conjugate partition
of $\lambda$. For any $k\in\N$ we have:
\[
l_k(w) = \lambda_1 + \ldots + \lambda_k,\
\til_k(w) = \tila_1 + \ldots + \tila_k.
\]
\end{thm}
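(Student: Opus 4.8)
The plan is to reduce everything to the single tableau $T = T(w)$, and then to read off $l_k$ and $\til_k$ from the shape of $T$ by a disjointness/covering argument. By Proposition~\ref{step1} we have $w \equiv w(T)$, and by Proposition~\ref{step2} the number $l_k$ is constant on $\equiv$-classes; the same argument (or Remark~\ref{rem: row-column duality} applied to the conjugate alphabet) shows that $\til_k$ is $\equiv$-invariant as well. Hence it suffices to prove $l_k(w(T)) = \la_1 + \cdots + \la_k$ and $\til_k(w(T)) = \tila_1 + \cdots + \tila_k$ for a single tableau $T$ of shape $\la$.

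The crucial lemma I would establish first is that a row word and a column word extracted from one and the same word meet in at most one position. Indeed, if positions $p < q$ belonged to both, the letters $x_p, x_q$ would satisfy $x_p \le x_q$ (being part of a row word) and $x_p \ge x_q$ (being part of a column word), so $x_p = x_q$; but equality inside a row word forces $|x_p| = 0$ while equality inside a column word forces $|x_p| = 1$, a contradiction. This is precisely the place where the complementary equality conditions built into the super setting are indispensable.

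I then use the geometry of the reading word: the positions of $w(T)$ are the boxes of $D(\la)$, the $i$-th row of $T$ occurs in $w(T)$ as a row word of length $\la_i$, and the $j$-th column occurs (read from bottom to top) as a column word of length $\tila_j$. The lower bound $l_k(w(T)) \ge \la_1 + \cdots + \la_k$ is immediate from the top $k$ rows. For the upper bound, take $k$ disjoint row words $R_1, \dots, R_k$ and split the boxes into the columns $C_1, \dots, C_{\la_1}$. By the intersection lemma each $R_i$ meets a given $C_j$ in at most one box, so $R_1 \cup \cdots \cup R_k$ covers at most $\min(k, \tila_j)$ boxes of $C_j$; summing over $j$ bounds the total length by $\sum_j \min(k, \tila_j)$, which equals $\la_1 + \cdots + \la_k$ since both sides count the boxes of $D(\la)$ lying in the first $k$ rows. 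This matches the lower bound.

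The assertion for $\til_k$ is obtained by running the identical argument with rows and columns exchanged: the first $k$ columns of $T$ give the lower bound, and partitioning the boxes into the rows of $T$ while applying the same intersection lemma bounds the total length of $k$ disjoint column words by $\sum_i \min(k, \la_i) = \tila_1 + \cdots + \tila_k$; Remark~\ref{rem: row-column duality} can be invoked to make this symmetry formal. I expect the main obstacle to lie not in the intersection lemma, which is a one-line check, but in setting up the upper bound cleanly --- especially in verifying that, under the bottom-to-top reading convention for $w(T)$, the columns of $T$ genuinely appear as column words, so that the column decomposition is legitimate and the intersection lemma applies to it.
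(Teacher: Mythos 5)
Your proposal is correct and follows essentially the same route as the paper: reduce to $w(T)$ via Propositions \ref{step1} and \ref{step2}, get the lower bound from the first $k$ rows, and get the upper bound from the observation that a row word extracted from $w(T)$ meets each column of $T$ in at most one box (your intersection lemma is exactly the super-signed sign-clash the paper uses, and your $\sum_j \min(k,\tila_j)$ count just makes the paper's ``at most $\la_1+\cdots+\la_k$ letters'' step explicit). The column case is handled by the same symmetry in both arguments, so there is nothing further to add.
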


\begin{proof}
By using the previous propositions, it is sufficient to prove that
$l_k(w(T)) = \lambda_1 + \ldots + \lambda_k$ for any Young tableau $T$.
If we consider the first $k$ rows of $T$, it is clear that
$l_k(w(T)) \geq \lambda_1 + \ldots + \lambda_k$. Conversely, if $w$ is
a row word of $w(T)$ then $w$ cannot have two letters in the same
column of $T$. Hence, a $k$-tuple of row words disjoint and extracted
by $w(T)$ can be done with at most $\lambda_1 + \ldots + \lambda_k$
letters of $T$. In other words, we have that $l_k(w(T)) \leq
\lambda_1 + \ldots + \lambda_k$.

A similar argument can be used for proving that $\til_k(w) =
\tila_1 + \ldots + \tila_k$.
\end{proof}

An analogous result has been proved in \cite{BereleRemmel85} for
the language of ``$(k,l)$-semistandard tableaux'' that is under
the assumption that $L_0 < L_1$. We can finally state the main
result of this section.

\begin{thm}[Cross-section theorem]
Let $L$ be a signed alphabet. The equivalence relation $\sim$
coincide with the congruence $\equiv$ on the monoid $\L$. Moreover,
the maps $w \mapsto T(w)$ and $T \mapsto w(T)$ define a one-to-one
correspondence between the super plactic monoid $\Pl(L)$ and the set
of super semistandard Young tableaux over $L$. In particular,
a cross-section of the plactic classes is given by the words
$w(T)$, as $T$ ranges over the set of Young tableaux.
\end{thm}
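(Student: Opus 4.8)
The proof splits into establishing the coincidence $\sim\,=\,\equiv$, after which the bijection and the cross-section statement follow formally. The inclusion $\sim\,\subseteq\,\equiv$ is immediate from Proposition \ref{step1}: if $T(w)=T(w')$, then $w\equiv w(T(w))=w(T(w'))\equiv w'$, so $w\equiv w'$. The entire content therefore lies in the reverse inclusion $\equiv\,\subseteq\,\sim$, namely that applying a Knuth relation to a word never changes its insertion tableau.

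To prove this, I would show that a single elementary application of (K1) or (K2) preserves $T(w)$. Since $\equiv$ is the congruence generated by these relations, it suffices to treat $w=u\,p\,v$ and $w'=u\,q\,v$, where $p\equiv q$ is one of the generating relations (K1) or (K2). Because the insertion $T(w)$ is built letter by letter from the empty tableau, $T(u\,p\,v)$ is obtained from $T(u\,p)$ by successively row-inserting the letters of $v$, and likewise for $q$; hence it is enough to prove that inserting the Knuth-related triple $p=p_1p_2p_3$ into the fixed tableau $R:=T(u)$ yields the same tableau as inserting $q=q_1q_2q_3$. Writing the core claim explicitly, this is
\[
[[[R\leftarrow p_1]\leftarrow p_2]\leftarrow p_3]=[[[R\leftarrow q_1]\leftarrow q_2]\leftarrow q_3].
\]

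I would prove the core claim by induction on the number of rows of $R$. When $R$ is empty this is exactly Lemma \ref{knuthrel}. For the inductive step I would row-insert the three letters of $p$ (resp.\ of $q$) into the first row of $R$ and argue that, in both cases, the resulting first row is the same and the letters bumped out of it into the lower rows again form a triple that is related by (K1) or (K2) or is literally equal; the induction hypothesis applied to the tableau formed by the remaining rows then closes the argument. The delicate point---and the \emph{main obstacle}---is to verify in the signed setting that the bumped triple is indeed Knuth-related with the correct parity conditions (the clauses ``$=$ only if $|y|=0$'' and ``$=$ only if $|y|=1$''). Here I would lean on the Row-bumping lemma (Proposition \ref{prop: row-bumping lemma}) to control the columns in which the bumped entries land, and hence their mutual order and the admissible coincidences among them; the case analysis according to the values of $|p_1|,|p_2|,|p_3|$ and the possible equalities among the three letters is precisely where the super-version departs from the classical computation. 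The column-dual configurations are handled automatically through the conjugation of Remark \ref{rem: row-column duality}, so no separate treatment is needed.

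Once $\sim\,=\,\equiv$ is known, the rest is formal. The assignment $T\mapsto w(T)$ induces a well-defined map into $\Pl(L)=\L/\!\equiv$, and $w\mapsto T(w)$ provides its inverse: Proposition \ref{step0} gives $T(w(T))=T$ for every tableau $T$, while Proposition \ref{step1} gives $w\equiv w(T(w))$, so the class of $w$ is sent to $T(w)$ and back to itself. Thus the two maps are mutually inverse bijections between $\Pl(L)$ and the set of super semistandard Young tableaux over $L$. Finally, the words $w(T)$ form a cross-section of the plactic classes: every class meets this set by Proposition \ref{step1}, and it meets it exactly once, since $w(T)\equiv w(T')$ forces $T=T(w(T))=T(w(T'))=T'$ by Proposition \ref{step0} and the already established inclusion $\equiv\,\subseteq\,\sim$.
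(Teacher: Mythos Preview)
Your proof of the inclusion $\sim\subseteq\equiv$ and your derivation of the bijection and cross-section from $\sim=\equiv$ match the paper exactly. The substantive difference is in how you establish $\equiv\subseteq\sim$.

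The paper does \emph{not} argue, as you do, that a single Knuth move leaves the insertion tableau unchanged by tracking the bumped triple through the rows. Instead it uses the Greene machinery built up just before the theorem: from $w\equiv w'$ it first invokes Proposition~\ref{step2} and Theorem~\ref{greene} to conclude that $T(w)$ and $T(w')$ have the same shape, and then runs an induction on the length of $w$ by locating the greatest letter $z$ (rightmost occurrence if $|z|=0$, leftmost if $|z|=1$), checking that deleting this $z$ from both words preserves $\equiv$, and observing that $T(w)$ is obtained from the (inductively equal) tableau $T(uv)$ by placing $z$ in the unique corner dictated by the common shape. Your route is the more ``hands-on'' Knuth-style argument: it is self-contained and avoids Greene's invariants entirely, at the cost of the row-by-row case analysis you flag as the main obstacle. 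The paper's route is cleaner once Propositions~\ref{step2} and Theorem~\ref{greene} are available, and indeed those results are what the paper is exploiting here.

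Two small remarks on your sketch. First, in the inductive step you should also account for the degenerate situations where fewer than three letters are bumped out of the first row (e.g.\ when one of the inserted letters is appended); these cases are easy but must be listed. Second, your appeal to Remark~\ref{rem: row-column duality} for ``column-dual configurations'' is misplaced: since $T(w)$ is defined purely by row-insertion, there is no column configuration to dualize, and that remark does not convert (K1) into (K2). You simply have to treat (K1) and (K2) separately in the case analysis.
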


\begin{proof}
If $T(w) = T(w')$ then by Proposition \ref{step1} we have:
\[
w \equiv w(T(w)) = w(T(w')) \equiv w'.
\]
Now assume $w \equiv w'$. By Proposition \ref{step2} and
Theorem \ref{greene} it follows that the tableaux $T(w),T(w')$ have
the same shape. Denote by $z$ the greatest letter of the words
(with same content) $w,w'$ and put:
\[
w = u z v,\ w' = u' z v'
\]
where $u,v,u',v'$ words. Moreover, assume that if $|z| = 0$ then
$z$ does not occur in $v,v'$ and if $|z| = 1$ then $z$ does not
occur in $u,u'$.

We want to prove that $u v \equiv u' v'$. We can assume that $w \equiv w'$
by means of one of the generating relations (K1),(K2). If $z$ does not occur
in such relation then clearly either $u \equiv u'$ and $v = v'$ or $u = u'$
and $v \equiv v'$. Otherwise, note that by deleting the letter $z$ in (K1)
or (K2) we have that $x y = x y$ or $y x = y x$. By the assumptions
on the words $u,v,u',v'$ with respect to $|z|$ we have therefore that
$u v = u' v'$.

By induction on the length of $w$, we can assume hence that $T(u v) = T(u' v')$.
Since $z$ is the greatest among the entries of $T(w) = T(u z v)$ and therefore
it occurs at the end of a row and of a column, it is plain that if we delete $z$ by a suitable
row of $T(w)$ then we get the tableau $T(u v)$. Since $T(u v) = T(u' v')$ and
the shapes of $T(w),T(w')$ are the same, we conclude that $T(w) = T(w')$.

Note finally that Proposition \ref{step0} states that $T(w(T)) = T$
for any tableau T. Now, by Proposition \ref{step1} we have also that
$[w(T(w))] = [w]$, for any plactic class $[w]$.
\end{proof}

Let us assume, in the rest of this section, $L$ being finite. It
is a well known fact that it is possible to build a ring, the so
called {\em group ring}, rising from a monoid. In our case, it is
a \Z-algebra, whose linear generators are the monomials in $Pl(L)$
(or, to be more precise, the plactic classes). This is an
associative and unitary (but not commutative) ring. Let us denote
it by $R_L$. A generic element in it can be realized by a formal
sum of classes with coefficients from $\Z$. Since every class can
be represented by a tableau, a typical element in $R_L$ is a
formal sum of tableaux. If $X:=\{x_a\mid a\in L\}$ is a set of
free variables, the map $a\in L\To x_a$ leads to an homomorphic
image of $R_L$ in the free commutative \Z-algebra $\Z[X]$.

Traditionally, the element
\[
S_{\la}=\sum_{T(w)\text{ has shape } \la}w
\]
(with $\la\vdash n$) gives rise to the so called {\em Schur function} $s_{\la}(X)$.
The properties of $R_L$ reflect themselves in the commutative ring $\Z[X]$ in several ways. A striking result is the Pieri-Young rule:
\begin{thm}\label{thm: classical Pieri-Young}
Let $\la$ be a partition of $n$. Then, for $p\in \N$,
\[
s_{\la}(X)\cdot s_{(p)}=\sum_{\mu}s_{\mu}
\]
where the sum is on all partitions $\mu\vdash(n+p)$ such that
$\la\subseteq \mu$ and the skew-diagram $\mu/\la$ does not
contains two boxes in the same column.

Similarly,
\[
s_{\la}(X)\cdot s_{(1^p)}=\sum_{\mu}s_{\mu}
\]
where the sum is on all partitions $\mu\vdash(n+p)$ such that
$\la\subseteq \mu$ and the skew-diagram $\mu/\la$ does not
contains two boxes in the same row.
\end{thm}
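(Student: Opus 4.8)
The plan is to lift the identity from the commutative ring $\Z[X]$ to the plactic level, where it becomes a bijection between tableaux, and then to read off the symmetric-function statement through the homomorphism $R_L \to \Z[X]$. Since this map sends each plactic class to its commutative monomial and carries $S_\nu$ to $s_\nu(X)$, it is enough to establish, inside $R_L$, the refined identities
\[
S_\la \cdot S_{(p)} = \sum_\mu S_\mu, \qquad S_\la \cdot S_{(1^p)} = \sum_\mu S_\mu,
\]
where in the first sum $\mu$ ranges over the partitions with $\la \subseteq \mu$ and $\mu/\la$ a horizontal strip of size $p$, and in the second over those with $\mu/\la$ a vertical strip. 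Here $S_\nu = \sum_{\mathrm{shape}(U)=\nu}[w(U)]$ and the products are taken in the super plactic monoid; the standing assumption that $L$ is finite guarantees these are finite sums.

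First I would reinterpret each product as insertion. Writing $S_{(p)} = \sum_R [w(R)]$ with $R$ running over the single rows of length $p$, the combination of Proposition \ref{step0}, Proposition \ref{step1} and the cross-section theorem gives $[\,w(T)\,w(R)\,] = T(w(T)\,w(R)) = [T \leftarrow w(R)]$, the class of the tableau obtained by row-inserting the letters of $R$ one after another into $T$. Hence
\[
S_\la \cdot S_{(p)} = \sum_{T,R} [\,T \leftarrow w(R)\,],
\]
the sum being over all tableaux $T$ of shape $\la$ and all rows $R$ of length $p$. The first task is to check that every resulting tableau has a shape $\mu$ with $\mu/\la$ a horizontal strip: if $r_i \le r_{i+1}$ are consecutive letters of the row word $w(R)$, then they satisfy condition (a) of the Row-bumping lemma (Proposition \ref{prop: row-bumping lemma}), so the new boxes they create occupy strictly increasing columns $j_1 < \dots < j_p$, and no two of the $p$ new cells lie in the same column.

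The heart of the argument, and where I expect the real work, is that $(T,R) \mapsto [T \leftarrow w(R)]$ is a \emph{bijection} onto the tableaux of shape $\mu$ with $\mu/\la$ a horizontal strip. I would construct the inverse by reverse row-insertion. Given such a $U$, the subshape $\la$ singles out the $p$ cells of $\mu/\la$, which lie in distinct columns $c_1 < \dots < c_p$; a short column-length computation shows the cell in the largest column $c_p$ is always a removable corner of $\mu$, since it is the lowest cell of column $c_p$ and, column lengths being weakly decreasing, also the last cell of its row. Applying the row-deletion $i \leftarrow U$ at the row $i$ of that corner returns a letter $r_p$ and a tableau $[i \leftarrow U]$ whose skew shape over $\la$ is a horizontal strip of size $p-1$; iterating down to a tableau $T$ of shape $\la$, and using the equivalence between conditions (a) and (b) of the Row-bumping lemma, shows that the extracted letters $r_1 \le \dots \le r_p$ form a genuine row word $R$ with $[T \leftarrow w(R)] = U$. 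Thus each admissible $U$ has exactly one preimage, so its coefficient on the left is $1$, matching $\sum_\mu S_\mu$, and the first Pieri identity holds in $R_L$.

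Finally, the vertical-strip identity follows by duality. Passing to the conjugate alphabet $\tilde L$ of Remark \ref{rem: row-column duality} interchanges row words and column words and replaces each tableau $T$ by its transpose $\tilde T$, so horizontal strips become vertical strips and shapes are conjugated; applying the first identity over $\tilde L$ and conjugating back yields $S_\la \cdot S_{(1^p)} = \sum_\mu S_\mu$ over vertical strips. Equivalently one argues directly: $S_{(1^p)} = \sum_C [w(C)]$ over columns $C$, the word $w(C)$ is the reversed column word, and inserting it produces, via the Column-bumping lemma (Proposition \ref{prop: column-bumping lemma}), new cells in distinct rows, with the analogous reverse-insertion bijection delivering the vertical-strip sum. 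Transporting both identities through $R_L \to \Z[X]$ then gives the two formulas for $s_\la(X)\,s_{(p)}$ and $s_\la(X)\,s_{(1^p)}$.
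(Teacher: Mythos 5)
Your argument is correct and is essentially the paper's own: the authors state this classical theorem without proof and then establish its super analogue immediately afterwards via Corollary \ref{cor: factorization by a row} — namely, the Row-bumping lemma forces the $p$ new boxes into distinct columns, reverse row-deletion starting from the rightmost box of $\mu/\la$ furnishes the unique factorization $U=[T\leftarrow w]$, and the identity in $R_L$ is then pushed through the homomorphism $R_L\To\Z[X]$ — which is precisely your proof specialized to $L=L_0$. One small repair for the vertical-strip case: when row-inserting the word of a column the relevant tool is the negation of Proposition \ref{prop: row-bumping lemma} (not (a) iff $j\geq j'$, whence $i<i'$), not Proposition \ref{prop: column-bumping lemma}, which governs the column-insertion procedure $x\rightarrow T$; likewise the duality route would need the (unproved here) compatibility of transposition with the plactic product, though the paper is equally terse on this half of the statement.
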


Now we are going to prove a similar result within our settings. So, let $L$ be a
proper finite signed alphabet, and let us denote by $R_L$ the group ring
associated to the superplactic monoid $\mc{M}(L)/\approx$.

As a consequence of Proposition \ref{prop: row-bumping lemma}, we get

\begin{cor}\label{cor: factorization by a row}
Let $T$ be a tableau, $\la\vdash n$ be its frame, and let $w$ be a
row word of length $p$. Then $[T\leftarrow w]$ is a tableau of
frame $\mu\vdash (n+p)$ containing $\la$ and such that the skew
diagram $\mu/\la$ does not contain any pair of boxes in the same
column.

Conversely, if $U$ is a tableau of frame $\mu\vdash (n+p)$, and
$\la$ is a diagram contained in $\mu$ and such that in the skew
diagram $\mu/\la$ no two boxes occur in the same column, then
there exist a unique tableau $T$ with frame $\la$ and a row word
$w$ of length $p$ such that $U=[T\leftarrow w]$.
\end{cor}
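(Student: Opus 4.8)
The plan is to prove both implications by induction on $p$, with Proposition~\ref{prop: row-bumping lemma} as the central tool. Throughout, for a row word $w = x_1 \cdots x_p$ I write $[T \leftarrow w] := [[\cdots[T \leftarrow x_1] \leftarrow x_2] \cdots \leftarrow x_p]$, so that a single letter is inserted at each step.

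For the forward direction I would first observe that each application of the row-insertion of Definition~\ref{df: row-insertion} appends exactly one box (Step~4), so $[T \leftarrow w]$ has shape $\mu \vdash (n+p)$ with $D(\lambda) \subseteq D(\mu)$, the inclusion being clear because boxes are only ever added. The content of the claim is the column condition on $\mu/\lambda$. Here I would apply Proposition~\ref{prop: row-bumping lemma} to each consecutive pair of insertions: since $w$ is a row word, for every $q$ one has $x_q \leq x_{q+1}$ with $x_q = x_{q+1}$ only if $\abs{x_q} = 0$, which is precisely condition~(a) of the lemma. Hence condition~(b) holds, i.e.\ the column $j_q$ of the box created by $x_q$ is strictly less than the column $j_{q+1}$ created by $x_{q+1}$. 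Thus $j_1 < j_2 < \cdots < j_p$, the $p$ new boxes occupy distinct columns, and $\mu/\lambda$ contains no two boxes in the same column.

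For the converse I would argue by induction on $p$, the case $p = 0$ being trivial. Let $(i_0, j_0)$ be the box of $\mu/\lambda$ of largest column index $j_0$. I would first check that $(i_0,j_0)$ is a removable corner of $\mu$: the box $(i_0, j_0+1)$ cannot lie in $\mu$, since it is neither in $\mu/\lambda$ (as $j_0$ is maximal) nor in $\lambda$ (that would force $(i_0,j_0) \in \lambda$); and $(i_0+1, j_0)$ cannot lie in $\mu$, since it is neither in $\mu/\lambda$ (that would put two boxes in column $j_0$) nor in $\lambda$. Being a removable corner, $(i_0,j_0)$ is the last box of its row, so the row-deletion $i_0 \leftarrow U$ of Definition~\ref{df: row-deletion} applies and, inverting the row-insertion, outputs a tableau $U'$ of shape $\mu' = \mu \setminus \{(i_0,j_0)\}$ together with a letter $x$ satisfying $[U' \leftarrow x] = U$. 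Since $\mu'/\lambda$ still has its $p-1$ boxes in distinct columns, the inductive hypothesis provides a unique tableau $T$ of shape $\lambda$ and a unique row word $w' = x_1 \cdots x_{p-1}$ with $U' = [T \leftarrow w']$. It remains to see that $w := w' x$ is again a row word: by the forward direction applied to $[T \leftarrow w']$, the box created by $x_{p-1}$ sits in the largest column of $\mu'/\lambda$, which is strictly smaller than $j_0$, so the consecutive insertions producing $x_{p-1}$ and then $x$ satisfy condition~(b) of Proposition~\ref{prop: row-bumping lemma}, whence condition~(a), namely $x_{p-1} \leq x$ with equality only if $\abs{x_{p-1}} = 0$. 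Thus $U = [T \leftarrow w]$, giving existence.

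Finally, for uniqueness suppose $U = [T \leftarrow w] = [S \leftarrow v]$ with $T,S$ of shape $\lambda$ and $w,v$ row words of length $p$. The forward direction shows that in either factorization the last inserted letter is the one whose box occupies the maximal column $(i_0,j_0)$; hence that final box, its row $i_0$, and the output of $i_0 \leftarrow U$ are all determined by $U$ and $\lambda$ alone. Therefore the last letters of $w$ and $v$ coincide and $[T \leftarrow x_1\cdots x_{p-1}] = [S \leftarrow v_1 \cdots v_{p-1}]$, and induction on $p$ finishes the argument. I expect the main obstacle to be the converse: making the reverse procedure unambiguous requires both the removable-corner verification and, above all, the use of the bumping lemma to certify that the successively extracted letters really do form a row word in the correct order.
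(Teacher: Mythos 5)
Your proof is correct and follows essentially the same route as the paper: the forward direction via Proposition~\ref{prop: row-bumping lemma} applied to consecutive insertions of the letters of the row word, and the converse by extracting letters with the row-deletion of Definition~\ref{df: row-deletion} starting from the rightmost box of $\mu/\la$ and moving leftward. The only divergence is in uniqueness, where the paper argues that two row words with the same content must coincide while you track the determinism of the deletion step; both work, and your converse is in fact more detailed than the paper's, which asserts without justification that the extracted word is a row word (a point you settle with the backward implication of the bumping lemma).
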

\begin{proof}
Let $T$ be a tableau with shape $\la$ and let $w=x_1\dots
x_p$. Let $(r_i,c_i)$ be the final extra box for the insertion  $(T\leftarrow x_1\dots x_{i-1})\leftarrow x_i$ in the tableau $[T\leftarrow
x_1\dots x_{i-1}]$ for each $i\leqslant p$. Notice that $\mu/\la$ is the skew diagram
consisting of the boxes $(r_1,c_1),\dots,(r_p, c_p)$. Moreover, since $w$ is a row word, it holds $x_1\leqslant
x_2\leqslant\dots \leqslant x_p$ and, if  $x_i=x_{i+1}$, then $\abs{x_i}=0$. Therefore, by Proposition \ref{prop: row-bumping lemma},
for all $i\leqslant p-1$ the boxes $(r_i,c_i)$ e $(r_{i+1},c_{i+1})$ satisfy
$c_i<c_{i+1}$. Hence no two boxes of the skew-diagram
$\mu/\la$ are in the same column.

Conversely, if $T_1$ is a tableau with shape $\mu$ and such that no two boxes of $\mu/\la$ are in the same column, then there exists a unique row word $w$ such that $[T\leftarrow w]=T_1$. The reason is the following: starting from the rightmost box of $\mu/\la$, say $(r_p,c_p)$, we may perform the row estraction algorithm, as in Definition \ref{df: row-deletion}, and we get a letter $x_p$ and a tableau with shape $\mu\setminus \{(r_p,c_p)\}$. Then we repeat the algorithm, at each step moving leftward. The word $w=x_1\dots x_p$ obtained this way is a row word and, of course, $[T\leftarrow w]=T_1$.
The uniqueness is clear: if $w'$ a word with the same property of $w$ then $w$ and $w'$
must have the same content, because $[T\leftarrow w]=[T\leftarrow w']$. Then, since
$w\approx w'$ and both must be row words, one has $w=w'$.
\end{proof}

With other words, this means that
\begin{thm}
Let $\la\vdash n$ and let $S_{\la}$ be the (formal) sum of all tableaux of shape
$\la$ over the proper signed alphabet $L$. Let $S_{(p)}$ be the sum of all row words
of length $p$. Then
\[
S_{\la}\cdot S_{(p)}=\sum_{\mu}S_{\mu}
\]
where $\mu$ runs over all partitions of $n+p$ such that
$\la\subseteq \mu$ and no two boxes of $\mu/\la$ occur in the same
column.
\end{thm}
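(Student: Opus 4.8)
The plan is to expand the product $S_{\la}\cdot S_{(p)}$ in the group ring $R_L$ as a double sum indexed by pairs $(T,w)$, to reinterpret each resulting plactic class as a single tableau obtained by row-insertion, and then to read off the result from the bijection of Corollary \ref{cor: factorization by a row}.

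First I recall that the linear generators of $R_L$ are the plactic classes, that the product of two classes is induced by juxtaposition of the corresponding words, and that by the Cross-section theorem each class is represented by exactly one Young tableau. Writing $S_{\la}=\sum_{T}[w(T)]$, where $T$ ranges over all tableaux of shape $\la$, and $S_{(p)}=\sum_{w}[w]$, where $w$ ranges over all row words of length $p$, distributivity of the product gives
\[
S_{\la}\cdot S_{(p)}=\sum_{T}\sum_{w}[\,w(T)\,w\,].
\]

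The crucial point is to identify the class $[\,w(T)\,w\,]$ with a single tableau. Using Proposition \ref{step0}, which asserts $T(w(T))=T$, together with the very definition of the map $v\mapsto T(v)$ as the iterated row-insertion of the letters of $v$, one obtains $T(w(T)\,w)=[T\leftarrow w]$, that is, the tableau produced by inserting the whole word $w$ into $T$. Identifying each plactic class with its tableau, this turns the double sum into
\[
S_{\la}\cdot S_{(p)}=\sum_{T}\sum_{w}[T\leftarrow w],
\]
the sum still being taken over tableaux $T$ of shape $\la$ and row words $w$ of length $p$.

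It remains to apply Corollary \ref{cor: factorization by a row} with $\la$ held fixed. Its direct part shows that every $[T\leftarrow w]$ is a tableau whose shape $\mu\vdash(n+p)$ contains $\la$ and is such that $\mu/\la$ has no two boxes in a common column; its converse part, which furnishes for each tableau of such a shape a \emph{unique} pair $(T,w)$ reconstructing it, shows that $(T,w)\mapsto[T\leftarrow w]$ is a bijection between our index set and the set of all tableaux of the admissible shapes $\mu$. Grouping the double sum according to $\mu$ therefore collects each $S_{\mu}$ with multiplicity exactly one, which is the asserted formula. I expect the only delicate point to be this translation between the purely algebraic product in $R_L$ and the combinatorial insertion operator; once it is in place the theorem is just a repackaging of Corollary \ref{cor: factorization by a row}, and the finiteness of $L$ enters only to guarantee that all the formal sums considered are finite.
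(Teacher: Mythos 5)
Your proposal is correct and follows essentially the same route as the paper: expand the product over pairs $(T,w)$, identify the class $[\,w(T)\,w\,]$ with the tableau $[T\leftarrow w]$, and invoke both directions of Corollary \ref{cor: factorization by a row} to get the bijection with tableaux of the admissible shapes $\mu$. Your write-up merely makes explicit the translation between the product in $R_L$ and the insertion operator, which the paper's shorter proof leaves implicit.
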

\begin{proof}
It holds
\[
S_{\la}\cdot S_{(p)}=\sum T_{\la}T_{(p)}.
\]
By applying Corollary \ref{cor: factorization by a row},
each tableau $T_{\la}T_{(p)}$ is of type $T_{\mu}$ with $\mu$
partition of $n+p$ containing $\la$ and satisfying the condition that no two boxes of $\mu/\la$ are in the same column. On the other hand, any tableau $T_{\mu}$ with these properties can be factorized uniquely as $T_{\la}T_{(p)}$. This proves the statement.
\end{proof}

Similar considerations provide a result extending the Pieri-Young
formula concerning the column words.

\section{Super RSK-correspondence}

If we want to count the number of words in each plactic class, we have
to transform the map $w \mapsto T(w)$ into a bijective correspondence
between words and pairs of Young tableaux. In each pair, one of the
tableaux has to be standard since related to the positions of the letters
in the corresponding word. If we consider instead any pair of Young
tableaux, the correspondence is with the so-called ``two-rowed arrays''.
We explain here these phenomena within the setting of signed alphabets.

\smallskip
Let $(L,\abs{\cdot}')$ and $(P,\abs{\cdot}'')$ be two alphabets.
On the product set $L \times P$ we can define a structure of alphabet
as follows. The total order is right lexicographic, that is:
\[
(a_1,b_1) < (a_2,b_2),\ \mbox{if}\ b_1 < b_2\ \mbox{or}\ b_1 = b_2, a_1 < a_2.
\]
Moreover, the map $\abs{\cdot}:L \times P \to \Z_2$ is defined as
$|(a,b)| := |a|' + |b|''$. To simplify the notation, from now on we will
denote by $\abs{\cdot}$ all the mappings from the sets $L,P$ and
$L\times P$ to $\Z_2$.

\begin{df}
\label{df: signed two-rowed array}
Let $L,P$ be two alphabets and let
$
S :=
\left[
\begin{array}{@{\hskip 2pt}c@{\hskip 3pt}c@{\hskip 3pt}c@{\hskip 2pt}}
a_1 & \ldots & a_n \\
b_1 & \ldots & b_n \\
\end{array}
\right]
$
be a $2 \times n$ matrix with $a_i\in L,b_i\in P$. We call $S$ a {\em signed
two-rowed array} on the alphabets $L,P$ if its entries satisfies the following
condition:
\[
(a_i,b_i) \leq (a_{i+1},b_{i+1}),\ \mbox{with}\
(a_i,b_i) = (a_{i+1},b_{i+1})\ \mbox{only if}\
|(a_i,b_i)| = 0.
\]
\end{df}

In Section 1 we have explained how to associate these combinatorial
objects to the monomials of the letter-place superalgebra.
They are related to the Young tableaux by means of the following result:

\begin{thm}\label{correspondence}[Super RSK-correspondence]
Let $L,P$ be signed alphabets. A one-to-one correspondence is given between
signed two-rowed arrays and pairs of super semistandard Young tableaux on $L,P$.
Precisely, if we denote by $S \mapsto (T,U)$ this mapping, we have that $T,U$
are tableaux of the same shape whose entries are the ones of the first
and the second row of $S$ respectively.
\end{thm}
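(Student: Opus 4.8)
The plan is to realize the map $S \mapsto (T,U)$ by a parity-sensitive RSK insertion and to invert it by the corresponding deletions, using the two bumping lemmas as the main engine. Write $S$ with columns $(a_1,b_1),\dots,(a_n,b_n)$; by Definition \ref{df: signed two-rowed array} the places satisfy $b_1 \leq \dots \leq b_n$, and inside a block of equal places $b_i = b_{i+1} = v$ one has $a_i \leq a_{i+1}$ with $a_i = a_{i+1}$ only when $|a_i| + |v| = 0$. Starting from a pair of empty tableaux, at the $i$-th step I would insert $a_i$ into the current insertion tableau and record $b_i$ in the box newly created by this insertion, placing it in $U$. The crucial point, which makes the whole construction work, is that the type of insertion must be governed by the parity of the \emph{place}: if $|b_i| = 0$ I would use the row-insertion $T \leftarrow a_i$ of Definition \ref{df: row-insertion}, while if $|b_i| = 1$ I would use the column-insertion $a_i \rightarrow T$ of Definition \ref{df: column-insertion}. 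In either case $T$ remains a Young tableau by Proposition \ref{prop: correctness of insertion}, and $T,U$ share the same shape by construction, since both grow by the same box at each step.

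Next I would check that $U$ is super semistandard over $P$. Since the places are weakly increasing, $U$ is filled in weakly increasing order of its entries, each new entry occupying an outer corner of the current shape; hence the up-neighbour and left-neighbour of a box, when present, were filled earlier and carry a smaller or equal value, so it suffices to control each block of boxes carrying a fixed value $v$. When $|v| = 0$ these boxes come from row-insertions of letters $a_i \leq a_{i+1}$ with equality only if $|a_i| = 0$, which is exactly hypothesis (a) of the Row-bumping lemma (Proposition \ref{prop: row-bumping lemma}); thus the new boxes occupy strictly increasing columns, the $v$-block is a horizontal strip, and this is precisely what condition (ii) of Definition \ref{df: super semistandard Young tableau} requires of an even value. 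When $|v| = 1$ the boxes come from column-insertions of letters with $a_i \leq a_{i+1}$, equality only if $|a_i| = 1$, which is hypothesis (a) of the Column-bumping lemma (Proposition \ref{prop: column-bumping lemma}); hence the new boxes occupy strictly increasing rows, the $v$-block is a vertical strip, matching condition (i) for an odd value. Combining the strip description of each value-block with the weakly increasing filling order yields both inequalities of Definition \ref{df: super semistandard Young tableau} for $U$.

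To invert the map I would peel off one column at a time. Given $(T,U)$ of the same shape, let $v$ be the largest entry of $U$; among the boxes carrying $v$ I would select the one in the rightmost column if $|v| = 0$, and the one in the lowest row if $|v| = 1$. By the strip description above this is the box added last, hence a removable corner of the common shape. I would then undo the matching insertion: a row-deletion (Definition \ref{df: row-deletion}) from that row if $|v| = 0$, or a column-deletion from that column if $|v| = 1$, recovering a letter $a_n$ together with a smaller tableau $T'$; I would set $b_n := v$ and erase the chosen box from $U$. Iterating reconstructs the columns $(a_i,b_i)$ for $i = n, n-1, \dots, 1$.

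Finally I would verify that the two constructions are mutually inverse and that the output of the second is a legitimate signed two-rowed array. Reversibility of a single step is guaranteed because each insertion is undone by its deletion, as already noted after Definition \ref{df: row-insertion}; the one thing to confirm is that the combinatorial rule for locating the last box (largest value, then extreme column or row according to parity) really identifies the box created by the last insertion, and this is exactly the content of the two bumping lemmas read backwards. The same equivalences (a)$\Leftrightarrow$(b) of Propositions \ref{prop: row-bumping lemma} and \ref{prop: column-bumping lemma} also force the reconstructed places to be weakly increasing and the letters inside a constant-place block to be weakly decreasing with the correct equality-by-parity constraint, so the array obeys Definition \ref{df: signed two-rowed array}. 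The main obstacle is precisely this bookkeeping: one must show that the parity-dependent choice of row- versus column-insertion is consistently matched, in reverse, by the parity-dependent choice of removable corner. The conceptual crux, from which everything else follows, is the realization that the insertion must be dictated by the sign of the place $b_i$ rather than of the letter $a_i$.
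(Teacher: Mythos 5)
Your construction coincides exactly with the paper's algorithms (Definitions~\ref{df: SmapstoTU} and~\ref{df: TUmapstoS}): the insertion type is governed by the parity of the place $b_i$, and the inverse selects the box of the maximal entry of $U$ in the extreme row or column dictated by that parity. The paper itself only cites \cite{BonettiSenatoVenezia88} for correctness and invertibility, whereas you supply those arguments via the horizontal/vertical strip structure coming from Propositions~\ref{prop: row-bumping lemma} and~\ref{prop: column-bumping lemma}; this is the intended justification and your sketch of it is sound.
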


This theorem is due to F. Bonetti, D. Senato and A. Venezia
\cite{BonettiSenatoVenezia88} and it is based on a variant of the
Robinson-Schensted-Knuth algorithms. They were originally
presented in the language of the four-fold algebra, and here we
give an equivalent formulation in terms of signed two-rowed
arrays. From now on, unless explicitly stated, we use the word
``two-rowed array'' for ``signed two-rowed array''.

\begin{df}
\label{df: SmapstoTU}
Let
$
S = \left[
\begin{array}{@{\hskip 2pt}c@{\hskip 3pt}c@{\hskip 3pt}c@{\hskip 2pt}}
a_1 & \ldots & a_n \\
b_1 & \ldots & b_n \\
\end{array}
\right]
$
be a two-rowed array on the alphabets $L,P$. The map $S \mapsto (T,U)$ is defined
by the following algorithm:

\smallskip \noindent
\begin{tabular}{@{\hskip 0pt}l@{\hskip 4pt}l}
{\it Step 0}  & Put $T := \varnothing,U := \varnothing$ and $k := 1$. \\
{\it Step 1}  & Put $x := a_k$ and $y := b_k$. \\
{\it Step 2a} & If $|y| = 0$ then put $T,i$ the output of $T \leftarrow x$
                and append y at the end \\
              & of row $i$ of $U$. \\
{\it Step 2b} & If $|y| = 1$ then put $T,j$ the output of $x \rightarrow T$
                and append y at the end \\
              & of column $j$ of $U$. \\
{\it Step 3}  & Put $k := k + 1$. \\
{\it Step 4}  & If $k \leq n $ then go to Step 1 else output $T,U$. \\
\end{tabular}
\end{df}

\smallskip
\begin{df}
\label{df: TUmapstoS}
Let $T,U$ a pair of Young tableaux of the same shape on the alphabets
respectively $L,P$. The map $(T,U) \mapsto S$ is by definition
the following algorithm:

\smallskip \noindent
\begin{tabular}{@{\hskip 0pt}l@{\hskip 4pt}l}
{\it Step 0}  & Put
$
S :=
\left[
\begin{array}{@{\hskip 2pt}c@{\hskip 3pt}c@{\hskip 3pt}c@{\hskip 2pt}}
a_1 & \ldots & a_n \\
b_1 & \ldots & b_n \\
\end{array}
\right]
$
and $k := 1$. \\
{\it Step 1}  & Put $y$ the maximal entry of $U$. \\
{\it Step 2a} & If $|y| = 0$ then put $i$ the minimal index of a row of $U$ containing $y$. \\
              & Put $T,x$ the output of $i \leftarrow T$ and delete $y$ from the end
                of row $i$ of $U$. \\
{\it Step 2b} & If $|y| = 1$ then put $j$ the minimal index of a column of $U$ containing $y$. \\
              & Put $T,x$ the output of $T \rightarrow j$ and delete $y$ from the end
                of column $j$ of $U$. \\
{\it Step 3}  & Put $a_k := x,b_k := y$ and $k := k + 1$. \\
{\it Step 4}  & If $k \leq n $ then go to Step 1 else output $S$. \\
\end{tabular}
\end{df}

\smallskip

The procedures $S \mapsto (T,U)$ and $(T,U) \mapsto S$ are both correct and
one is the inverse of the other. The proof of these facts can be viewed in section 4 of \cite{BonettiSenatoVenezia88} (more precisely, in Proposition 4.2 and 4.3). From this, Theorem \ref{correspondence} follows immediately.

The number of words in each plactic class can be computed in the
following way. Put $P = P_0 := \N$ and note that there is an
injective map $w \mapsto S$ from the set $\L$ into the set of
two-rowed arrays, where if $w:= x_1 \cdots x_n$ then:
\[
S :=
\left[
\begin{array}{@{\hskip 2pt}c@{\hskip 3pt}c@{\hskip 3pt}c@{\hskip 2pt}}
x_1 & \ldots & x_n \\
 1  & \ldots &  n  \\
\end{array}
\right]
\]
If $S \mapsto (T,U)$, from Definition \ref{df: SmapstoTU} it follows that
$T(w) = T$ since $P = P_0$. Moreover, if $\la\vdash n$ is the shape of $T$
then $U$ is a standard tableau of shape $\la$ whose entries are $1,2,\ldots,n$.
By Theorem \ref{correspondence} we get finally:

\begin{cor}
Let $L$ be a signed alphabet. Let $w\in\L$ and $\la$ be the shape
of the tableau $T := T(w)$. The number of words in the plactic class
$[w]$ is equal to the number of standard Young tableaux of shape $\la$.
\end{cor}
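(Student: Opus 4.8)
The plan is to read the statement as an immediate specialization of the super RSK-correspondence of Theorem \ref{correspondence} to the situation already fixed in the paragraph preceding it, namely $P = P_0 := \N$. By definition the plactic class $[w]$ is the set of words $w'\in\L$ with $T(w') = T(w) = T$, so the task is to parametrize these words bijectively by standard tableaux of shape $\la$. The strategy is to run every such $w'$ through the embedding $w'\mapsto S'$ into two-rowed arrays and then through the RSK map, and to observe that the recording tableau records exactly the data of a standard tableau.

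First I would set up the bijection between words of length $n$ and two-rowed arrays whose bottom row is $1,\dots,n$. For $w' = x_1\cdots x_n$ one forms $S'$ with top row $x_1,\dots,x_n$ and bottom row $1,\dots,n$; since the places strictly increase and the order on $L\times P$ is right lexicographic (comparing places first), the defining inequality of Definition \ref{df: signed two-rowed array} holds automatically, so $S'$ is a legitimate array, and conversely every array with bottom row $1,\dots,n$ comes from a unique word. Next I would feed $S'$ into the map $S'\mapsto(T',U')$ of Definition \ref{df: SmapstoTU}. Because each place $b_i = i$ satisfies $|b_i| = 0$, the algorithm invariably executes Step 2a, performing row-insertion of the $x_i$; hence $T' = T(w')$, while the recording tableau $U'$ carries the entries $1,\dots,n$. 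As these entries are distinct and all of $\Z_2$-degree $0$, conditions (i)--(ii) of Definition \ref{df: super semistandard Young tableau} force strict increase along both rows and columns, so $U'$ is a standard Young tableau of the same shape as $T'$. Restricting attention to $[w]$ pins the insertion tableau to the single tableau $T$ of shape $\la$, so the composite carries $[w]$ onto the set of pairs $(T,U')$ with $U'$ standard of shape $\la$; since $T$ is held fixed, the count of words in $[w]$ equals the count of such $U'$.

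The only point needing care is surjectivity: that \emph{every} standard tableau $U'$ of shape $\la$ is actually attained. This is exactly where Theorem \ref{correspondence} carries the weight. Given such a $U'$ and the fixed $T$ of the same shape, the inverse correspondence produces a unique array $S'$ with $S'\mapsto(T,U')$, and one must verify that its bottom row is precisely $1,\dots,n$ so that $S'$ corresponds to a genuine word. This holds because the bottom row of $S'$ is the multiset of entries of $U'$, namely $\{1,\dots,n\}$, which sorted in right-lexicographic order (places first) reads $1,2,\dots,n$; thus $S'$ lies in the image of the word embedding, the correspondence restricts to a bijection between $[w]$ and the standard tableaux of shape $\la$, and the statement follows.
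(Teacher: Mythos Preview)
Your argument is correct and is precisely the approach the paper takes: the paragraph immediately preceding the corollary already records that the embedding $w\mapsto S$ together with Definition~\ref{df: SmapstoTU} yields $T(w)=T$ (since $P=P_0$ forces Step~2a throughout) and that $U$ is a standard tableau with entries $1,\dots,n$, after which Theorem~\ref{correspondence} is invoked. You have simply spelled out the details the paper leaves implicit, in particular the surjectivity step via the inverse correspondence.
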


A natural question is to give the right  generalization of the so
called {\em symmetry Theorem} in the case of signed alphabets.
Consider the product alphabets $L \times P$ and $P \times L$.
Recall that they are ordered by the right lexicographic ordering
and $|(a,b)| := |a| + |b|$ for any pair $(a,b)$ belonging to one
of them. We can define an involution between the set of two-rowed
arrays on the alphabets $L,P$ and the corresponding set for $P,L$
simply as:
\[
S =
\left[
\begin{array}{@{\hskip 2pt}c@{\hskip 3pt}c@{\hskip 3pt}c@{\hskip 2pt}}
a_1 & \ldots & a_n \\
b_1 & \ldots & b_n \\
\end{array}
\right]
\mapsto
S' =
\left[
\begin{array}{@{\hskip 2pt}c@{\hskip 3pt}c@{\hskip 3pt}c@{\hskip 2pt}}
b'_1 & \ldots & b'_n \\
a'_1 & \ldots & a'_n \\
\end{array}
\right]
\]
where $\{a_1, \ldots, a_n\} = \{a'_1, \ldots, a'_n\}$ and
$\{b_1, \ldots, b_n\} = \{b'_1, \ldots, b'_n\}$.

\smallskip
\begin{df}\label{df: symmetry}
Let $L,P$ signed alphabets. We say that a two-rowed array $S$ {\em
has symmetry} if $S\mapsto (T,U)$ and $S'\mapsto (U,T)$.
\end{df}

\smallskip
The symmetry theorem \cite{Schutz63,Knuth70} states that in the non-signed case
($L = L_0,P = P_0$ or $L = L_1,P = P_1$) all two-rowed arrays have symmetry.
We generalize this result to a particular case in the following way:

\begin{prop}
\label{susy}
Assume that the alphabets satisfy the conditions $L_0 < L_1,P_0 < P_1$
or $L_1 < L_0,P_1 < P_0$ and let
$
S =
\left[
\begin{array}{@{\hskip 2pt}c@{\hskip 3pt}c@{\hskip 3pt}c@{\hskip 2pt}}
a_1 & \ldots & a_n \\
b_1 & \ldots & b_n \\
\end{array}
\right]
$
be a two-rowed array on $L,P$. If $|(a_i,b_i)| = 0$ for $i=1,2,\ldots,n$
then $S$ has symmetry.
\end{prop}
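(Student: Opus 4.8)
The plan is to use the two orderings together with the condition $|(a_i,b_i)| = 0$ to break $S$ into two homogeneous blocks and then reduce, on each block, to the ordinary symmetry theorem. Since $|(a_i,b_i)| = |a_i| + |b_i| = 0$, every column of $S$ satisfies $|a_i| = |b_i|$; thus each column is either \emph{even} (with $a_i\in L_0$, $b_i\in P_0$) or \emph{odd} (with $a_i\in L_1$, $b_i\in P_1$). Because the columns of $S$ are sorted right-lexicographically by their $P$-entry and $P_0 < P_1$, all even columns precede all odd columns, so $S = S_0 S_1$ is the concatenation of an even block $S_0$ and an odd block $S_1$. Likewise, in $S'$ the columns are sorted by their $L$-entry and $L_0 < L_1$, so $S' = S'_0 S'_1$; one checks directly that $S'_0$ and $S'_1$ are exactly the classical transposes of $S_0$ and $S_1$, obtained by swapping the two rows and re-sorting within each block.

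Next I would treat the two blocks separately. On the even block one has $b_i\in P_0$, so Step 2a of Definition \ref{df: SmapstoTU} applies throughout and the super algorithm is \emph{exactly} the classical row-insertion RSK on the ordinary alphabets $L_0,P_0$; the classical symmetry theorem \cite{Schutz63,Knuth70} then gives that $S_0\mapsto (T_0,U_0)$ forces $S'_0\mapsto (U_0,T_0)$. On the odd block one has $b_i\in P_1$, so Step 2b applies throughout and only column-insertion is used. Here I would invoke Remark \ref{rem: row-column duality}: passing to the conjugate alphabets turns every odd letter into an even one and turns column-insertion into row-insertion, so the odd block is again an instance of classical RSK (now on $\tilde L,\tilde P$). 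Applying the classical symmetry theorem in the conjugate picture and transporting back yields that $S_1\mapsto (T_1,U_1)$ forces $S'_1\mapsto (U_1,T_1)$.

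The crux is to show that running the full super algorithm on $S = S_0 S_1$ really decomposes as the even computation glued to the odd computation, and similarly for $S'$. The key point is an independence statement: since $L_0 < L_1$, every entry of the insertion tableau produced by the even phase is strictly smaller than any odd letter, so in Step 2b the set $I = \{i : x < \tau(i,j)\}$ can never contain a row occupied by an even entry. Hence the odd column-insertions never bump even entries, the even subtableau is frozen once the odd phase begins, and the odd insertions depend only on the odd entries (the even column-heights merely shift the absolute row indices). Consequently $T$ is obtained by stacking the odd tableau $T_1$ below the even tableau $T_0$ column by column, the recording tableau $U$ is obtained the same way from $U_0,U_1$, and in particular $T$ and $U$ share a common shape. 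The identical reasoning applies to $S'$, this time using $P_0 < P_1$ so that in the column-insertion steps for $S'$ the odd $P$-letters never bump the even ones. Combining the two block symmetries through this gluing gives $S\mapsto (T,U)$ together with $S'\mapsto (U,T)$, i.e. $S$ has symmetry.

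I expect the gluing/independence step to be the main obstacle: one must verify carefully that the bumping routes of the odd phase are unaffected by the presence of the even subtableau (only the row offsets change), and that the two recording tableaux assemble on the correct skew shapes so that the block-wise symmetries genuinely compose. Note that the two hypotheses are each used once in each direction, namely $P_0 < P_1$ and $L_0 < L_1$ to split $S$ and $S'$, and $L_0 < L_1$ and $P_0 < P_1$ to secure independence for $S$ and $S'$ respectively; this also makes transparent why the parity condition $|(a_i,b_i)| = 0$ cannot be dropped, as a mixed-parity column would destroy the even/odd block decomposition. Finally, the alternative hypothesis $L_1 < L_0,\ P_1 < P_0$ is the conjugate of the first (the parity of each pair being preserved under conjugation) and is handled by the same argument after applying Remark \ref{rem: row-column duality} globally.
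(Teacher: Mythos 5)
Your proposal is correct and follows the paper's strategy very closely: both proofs split $S$ (and $S'$) into an even block and an odd block using $P_0<P_1$ (resp.\ $L_0<L_1$), both dispose of the even block by the classical symmetry theorem, and both rest on the same independence observation --- because $L_0<L_1$ (resp.\ $P_0<P_1$) the odd column-insertions never bump even entries, so the odd phase sees only the shape $\mu$ of the even subtableau, not its filling. The one genuine divergence is in how the odd-block symmetry is transported to the skew shape $\la/\mu$. You apply classical (dual) RSK symmetry to $S_1$ in isolation, obtaining straight-shape tableaux, and then argue a column-by-column stacking correspondence with the in-situ skew computation; as you yourself flag, this is the step that needs care --- one must check that the set $I$ of Definition \ref{df: column-insertion} never meets a row occupied by an even entry, so that $\min(I)$ selects the same odd entry in both pictures and each insertion exits in the same column, and that the glued column heights $\tilde{\mu}_j+\tilde{\nu}_j$ stay weakly decreasing so the result is a partition shape. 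The paper avoids reasoning about skew insertion altogether: it prepends to $S_1$ an auxiliary array $\bar S_1$ over new purely-odd alphabets $\bar L_1<L_1$, $\bar P_1<P_1$ whose insertion tableau is the dummy tableau $C_\mu$ of shape $\mu$, so the whole odd computation becomes one classical straight-shape RSK to which the symmetry theorem applies verbatim, and the same independence observation identifies the skew parts of $(\bar T,\bar U)$ with $(T_1,U_1)$. The two mechanisms establish the same fact; yours is more hands-on and self-contained, while the paper's padding trick is cleaner in that it only ever invokes the classical theorem on straight shapes. Your closing remarks on the necessity of the parity hypothesis and on handling $L_1<L_0,\,P_1<P_0$ by conjugation match the paper as well.
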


\smallskip
Before proving the theorem we need new notations. Assume $L_0 < L_1,
P_0 < P_1$ and let $S$ be a two-rowed array such that $|(a_i,b_i)| = 0$
that is $|a_i|$ = $|b_i|$ for all $i$. Since $b_1 \leq \ldots \leq b_n$,
note that $S$ {\em splits} as
$
S =
\left[
\begin{array}{@{\hskip 2pt}c@{\hskip 4pt}c@{\hskip 2pt}}
S_0 & S_1 \\
\end{array}
\right],
$
where $S_0$ is a two-rowed array on $L_0,P_0$ and $S_1$ on $L_1,P_1$.
Moreover, if $|a_i| = |b_i| = 0$ for $1 \leq i \leq k$ then clearly
$S_0$ has $k$ columns.

Suppose now that $S \mapsto (T,U)$ and let $\la \vdash n$ be the shape
of the Young tableaux $T,U$. Note that also $T$ {\em splits} into two tableaux
$T_0,T_1$ of content respectively $\{a_1,\ldots,a_k\},\{a_{k+1},\ldots,a_n\}$,
where $T_0$ has some shape $\mu \vdash k,\mu \subset \la$ and $T_1$ has the skew
shape $\la/\mu$. The same happens to the tableau $U$ and it is clear that
$S_0 \mapsto (T_0,U_0)$.

\begin{ex}
Let $L_0 = P_0 := \{1,2\},L_1 = P_1 := \{3,4,5,6\}$ and define:
\[
S :=
\left[
\begin{array}{cccccccc}
2 & 1 & 1 & 1 & 6 & 5 & 4 & 3  \\
1 & 2 & 2 & 2 & 3 & 4 & 5 & 6  \\
\end{array}
\right].
\]
Then, we have:
\[
T =
\begin{array}{cccc}
1 & 1 & 1 & 6 \\
2 & 4 & 5     \\
3             \\
\end{array},
\
U =
\begin{array}{cccc}
1 & 2 & 2 & 6 \\
2 & 4 & 5     \\
3         \\
\end{array}
\]
and hence:
\[
T_0 =
\begin{array}{ccc}
1 & 1 & 1 \\
2         \\
\end{array},
\
T_1 =
\begin{array}{cccc}
  &   &   & 6 \\
  & 4 & 5     \\
3             \\
\end{array},
\
U_0 =
\begin{array}{ccc}
1 & 2 & 2 \\
2      \\
\end{array},
\
U_1 =
\begin{array}{cccc}
  &   &   & 6 \\
  & 4 & 5     \\
3             \\
\end{array}.
\]
\end{ex}

Finally, for any partition $\la = (\la_1,\ldots,\la_r)$ we define:
\[
C_\la :=
\begin{array}{cccl}
1 & 2 & \ldots & \hskip 10pt \lambda_1 \\
1 & 2 & \ldots & \hskip 5pt  \lambda_2 \\
\vdots & \vdots                        \\
1 & 2 & \ldots & \lambda_r
\end{array}
\]
Clearly $C_\la$ is a Young tableau on the signed alphabet $L = L_1 :=
\{1,2,\ldots,\la_1\}$.

\medskip
\noindent{\it Proof of Proposition \ref{susy}.}
We argue for the case $L_0 < L_1,P_0 < P_1$ since the other case
can be recovered by conjugating the alphabets. Let $S \mapsto S'$,
where
$
S' =
\left[
\begin{array}{@{\hskip 2pt}c@{\hskip 3pt}c@{\hskip 3pt}c@{\hskip 2pt}}
b'_1 & \ldots & b'_n \\
a'_1 & \ldots & a'_n \\
\end{array}
\right],
$
and let $S \mapsto (T,U), S' \mapsto (T',U')$. Since $|a_i| = |b_i|$ and
therefore $|b'_i| = |a'_i|$ for any $i$, we have that the two-rowed arrays
$S,S'$ split respectively into $S_\alpha,S'_\alpha$ $(\alpha = 0,1)$.
Moreover, denote by $T_\alpha,U_\alpha,T'_\alpha,U'_\alpha$ the tableaux
obtained by splitting $T,U,T',U'$. Clearly $S_0$ maps to $S'_0$ under
the involution and we have that
$
S_0 \mapsto (T_0,U_0),
S'_0 \mapsto (T'_0,U'_0).
$
By the symmetry theorem it follows that $S_0$ has symmetry that is
$T'_0 = U_0,U'_0 = T_0$.

We claim that we have also $T'_1 = U_1,U'_1 = T_1$ and hence $S$ has symmetry.
Let $\lambda \vdash n$ and $\mu \subset \la$ be the shapes of the pairs
of tableaux respectively $T,U$ and $T_0,U_0$. Consider the tableau $C_\mu$
defined on an alphabet $\bar{L} := \bar{L}_1$ such that $\bar{L}_1 < L_1$ and
put $w := w(\tilde{C_\mu})$. If $w = \bar{a}_1 \cdots \bar{a}_k$ then we define
also
$
\bar{S}_1 :=
\left[
\begin{array}{@{\hskip 2pt}c@{\hskip 3pt}c@{\hskip 3pt}c@{\hskip 2pt}}
\bar{a}_1 & \ldots & \bar{a}_k \\
\bar{b}_1 & \ldots & \bar{b}_k \\
\end{array}
\right],
$
where $\bar{b}_1 < \ldots < \bar{b}_k$ are letters of an alphabet
$\bar{P} := \bar{P}_1$ such that $\bar{P}_1 < P_1$. Put
$
\bar{S} :=
\left[
\begin{array}{@{\hskip 2pt}c@{\hskip 4pt}c@{\hskip 2pt}}
\bar{S}_1 & S_1 \\
\end{array}
\right]
$
and let $\bar{S} \mapsto (\bar{T},\bar{U})$. The two-rowed array $\bar{S}_1$
clearly maps to a pair of tableaux of shape $\mu$ (the left one is $C_\mu$).
>From $\bar{L}_1 < L_1,\bar{P}_1 < P_1$ it follows that $\bar{T},\bar{U}$ have
shape $\la$ and are respectively the union of these tableaux with the tableaux
$T_1,U_1$ of skew shape $\la/\mu$. Since $\bar{S}_1,\bar{S}$ have symmetry
we conclude that the claim holds.
\qed

We remark that the condition in Proposition \ref{susy} is not
necessary. As an instance, whatever the signature is, the
two-rowed array
\[
\left[
\begin{array}{cccc}
3 & 4 & 1 & 2  \\
1 & 2 & 3 & 4
\end{array}
\right]
\]
has symmetry. To the best of our knowledge, proving a symmetry
Theorem taking into account a definition of symmetry more general
than Definition \ref{df: symmetry} and leading to the classical
statement as a particular case, is still an open problem.

\end{document}